\newtheorem{theorem}{Theorem}[section]
\newtheorem*{theorem*}{Theorem}
\newtheorem{remark}{Remark}[section]
\newtheorem{definition}[remark]{Definition}
\newtheorem{proposition}[remark]{Proposition}
\newtheorem{example}[remark]{Example}
\numberwithin{equation}{section}
\numberwithin{figure}{section}
\newcommand{\Na}{\mathbb{N}} 
\newcommand{\Z}{\mathbb{Z}} 
\newcommand{\R}{\mathbb{R}} 
\newcommand{\Sf}{\mathbb{S}} 
\newcommand{\Ha}{\mathcal{H}} 
\newcommand{\D}{\mathscr{D}}
\newcommand{\e}{\varepsilon} 
\newcommand{\spt}{\mathrm{spt}} 
\newcommand{\dist}{\mathrm{dist}} 
\newcommand{\di}{\mathrm{div}} 
\newcommand{\diam}{\mathrm{diam}} 
\newcommand{\Lip}{\mathrm{Lip}} 
\newcommand{\abs}[1]{\lvert#1\rvert} 
\def\var{\mathbf{var}\,}
\def\E{\mathcal{E}}
\def\H{\mathcal{H}}
\def\HH{\mathbf{H}}
\def\SS{\mathbb S}
\def\N{\mathbb N}
\def\Z{\mathbb Z}
\def\R{\mathbb R}
\def\Om{\Omega}
\def\a{\alpha}
\def\b{\beta}
\def\de{\delta}
\def\e{\varepsilon}
\def\k{\kappa}
\def\l{\lambda}
\def\s{\sigma}
\def\om{\omega}
\def\vphi{\varphi}
\def\cl{{\rm cl}\,}
\def\Lip{{\rm Lip}}
\def\Div{{\rm div}\,}
\def\Id{{\rm Id}\,}
\def\dist{{\rm dist}}
\def\diam{{\rm diam}}
\def\spt{{\rm spt}}
\def\weakstar{\stackrel{*}{\rightharpoonup}}
\def\ov{\overline}
\def\pa{\partial}
\def\bd{{\rm bd}\,}
\title[Soap films with gravity and almost-minimal surfaces]{Soap films with gravity \\ and almost-minimal surfaces}
\author[Maggi]{F. Maggi}
\address[Francesco Maggi]{
\newline \indent Department of Mathematics, University of Texas at Austin,
\newline \indent 2515 Speedway STOP C1200, 78712, Austin, Texas, USA}
\email{maggi@math.utexas.edu}
\author[Scardicchio]{A. Scardicchio}
\address[Antonello Scardicchio]{
\newline \indent International Centre for Theoretical Physics,
\newline \indent Strada Costiera 11, 34151, Trieste, Italy}
\email{ascardic@ictp.it}
\author[Stuvard]{S. Stuvard}
\address[Salvatore Stuvard]{
\newline \indent Department of Mathematics, University of Texas at Austin,
\newline \indent 2515 Speedway STOP C1200, 78712, Austin, Texas, USA}
\email{stuvard@math.utexas.edu}
\begin{document}

\begin{abstract}
Motivated by the study of the equilibrium equations for a soap film hanging from a wire frame, we prove a compactness theorem for surfaces with asymptotically vanishing mean curvature and fixed or converging boundaries. In particular, we obtain sufficient geometric conditions for the minimal surfaces spanned by a given boundary to represent all the possible limits of sequences of almost-minimal surfaces. Finally, we provide some sharp quantitative estimates on the distance of an almost-minimal surface from its limit minimal surface.
%
\end{abstract}

\maketitle

\begin{center}{\it Dedicated to Luis Caffarelli, on his 70th birthday}\end{center}

\smallskip

\tableofcontents

\section{Introduction} In the study of soap films under the action of gravity, one is interested in surfaces with {\it small} but non-zero mean curvature spanned by a given boundary. Indeed, as explained in section \ref{section soap films with gravity} below, the mid-surface $M$ of a film of thickness $2\,h>0$ satisfies in first approximation the equilibrium condition
\begin{equation}
  \label{minimal surface with gravity h}
  H_M(x)=\k^2\,h\,\nu_M(x)\cdot e_3+O(h^2)\qquad\forall x\in M\,,
\end{equation}
where $H_M$ is the mean curvature of $M$ with respect to the unit normal $\nu_M$, $e_3$ is the vertical direction, and $\k^{-1}$ is the {\it capillary length} of the film, defined by
\begin{equation}
  \label{capillarity length}
  \k:=\sqrt{\frac{g\,\rho}\s}\,.
\end{equation}
Here, $\rho$ is the volume density of mass for the film solution, $\s$ denotes the surface tension of the film (with dimensions Newton per unit length), and $g$ is the gravity acceleration on Earth. The interest for this equation lies in the fact that it correctly encodes several physical properties which are missed by the minimal surface equation $H_M=0$, e.g. the fact that actual soap films cannot be formed under arbitrary large scalings of the boundary curve.

In this setting, the first question one wants to answer is whether minimal surfaces are a good model for their small mean curvature counterpart. In this paper, we provide a general sufficient condition on the boundary data to ensure the validity of this approximation. When the model minimal surface is smooth and strictly stable, we also provide quantitative estimates for almost-minimal surfaces in terms of their total mean curvature. Since formal statements require the introduction of a few concepts from Geometric Measure Theory, we present for the moment just an informal and simplified version of our main results.

\begin{theorem*} Let $\Gamma$ be a compact, orientable $(n-1)$-dimensional surface without boundary in $\R^{n+1}$, and let $\{M_j\}_j$ be a sequence of compact, orientable $n$-dimensional surfaces in $\R^{n+1}$ with boundaries $\Gamma_j=f_j(\Gamma)$ for maps $f_j$ converging in $C^1$ to the identity map, and such that (denoting by $\H^n$ the $n$-dimensional Hausdorff measure in $\R^{n+1}$),
\[
\sup_{j\in\N}\Big\{\max_{x\in M_j}|x|, \H^n(M_j)\Big\}<\infty\,,\qquad \lim_{j\to\infty}\int_{M_j}|H_{M_j}|\,d\H^n=0\,.
\]
Assume that $\Gamma$ has the following two properties:

\medskip

\noindent {\bf Finiteness and regularity of the Plateau problem:} There are finitely many minimal surfaces $\{N_i\}_i$ spanned by $\Gamma$, possibly including in the count ``singular'' minimal surfaces, whose singularities are anyway located away from $\Gamma$.

\medskip

\noindent {\bf Accessibility from infinity:} For each connected component $\Gamma'$ of $\Gamma$, the set of points $x\in\Gamma'$ such that, for some unit vectors $\nu_1$ and $\nu_2$ with $\nu_1\cdot\nu_2<1$, the inclusion
   \begin{equation}
     \label{acc}
   \Gamma\subset x+\Big\{y\in\R^{n+1}\,:\,y\cdot\nu_1\geq 0\,,y\cdot\nu_2\geq  0\Big\}
   \end{equation}
   holds, is a set of positive $\H^{n-1}$-measure; see
   \begin{figure}
     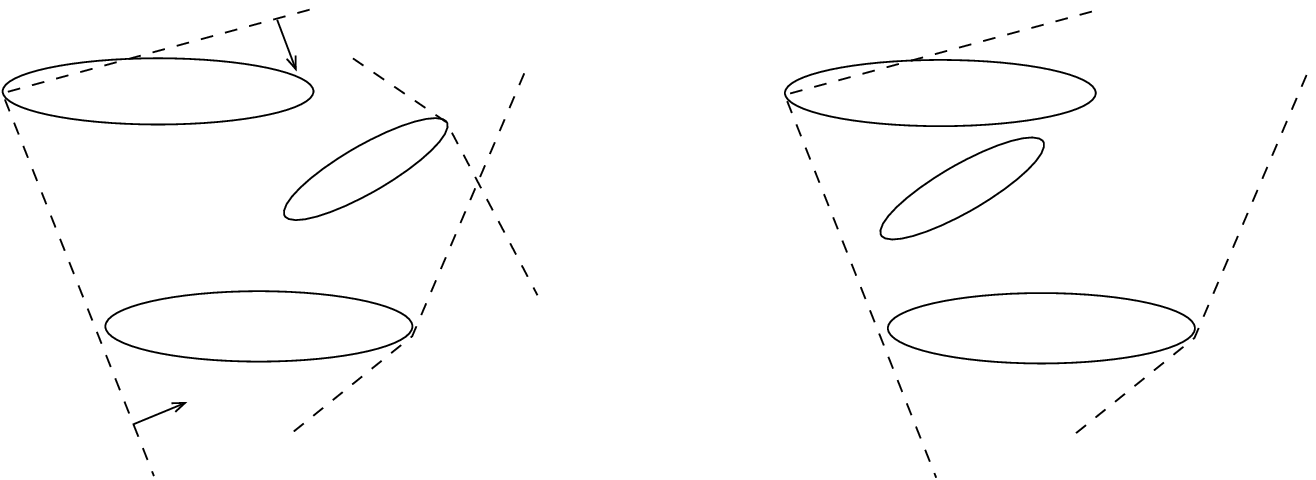\caption{{\small On the left, a boundary $\Gamma$, consisting of three circles, that is accessible from infinity. The acute wedges realizing the inclusions \eqref{acc} are depicted by dashed lines. Notice that it is not necessary that $\Gamma$ is contained into a convex set, or into a mean convex set, for the condition to hold. On the right, another set of circles defining a boundary $\Gamma$ which does not satisfy accessibility from infinity. Indeed, there is no way to touch the smaller circle with an acute wedge containing the larger ones.}}\label{fig accessible}
   \end{figure}
   Figure \ref{fig accessible}.

\medskip

\noindent Under these two assumptions, we have the following conclusions:

\medskip

\noindent {\bf No-bubbling:} There exists a single minimal surface $N_i$ such that $M_j\to N_i$ as $j\to\infty$, in the sense that there exist open sets $\{E_j\}_j$ with smooth boundary such that
\[
\lim_{j\to\infty}|E_j|+\H^n\Big(\pa E_j\setminus(N_i\cup M_j)\Big)=0\,.
\]
Here $|E|$ denotes the $(n+1)$-dimensional volume of $E\subset\R^{n+1}$.

\medskip

\noindent {\bf Strong convergence and sharp estimates:} If in addition $\Gamma_j=\Gamma$, $N_i$ has no singularities, and $N_i$ is strictly stable, in the sense that, for a positive constant $\l$,
\[
\int_{N_i} \abs{\nabla \varphi}^2 - \abs{A_{N_i}}^2 \varphi^2 \geq \lambda \int_{N_i} \varphi^2 \qquad \forall \varphi \in H^{1}_0(N_i)\,,
\]
(where $|A_{N_i}|$ is the Hilbert-Schmidt norm of the second fundamental form of $N_i \hookrightarrow \R^{n+1} $), and if for some $p>n$ we have a uniform bound
\[
\sup_{j\in\N}\int_{M_j}|H_{M_j}|^p\,d\H^n<\infty\,,
\]
then there exist smooth functions $u_j:N_i\to\R$ with $u_j=0$ on $\pa N_i$ and $\|u_j\|_{C^1(N_i)}\to 0$ as $j\to\infty$ such that
\[
M_j=\Big\{x+u_j(x)\,\nu_{N_i}(x):x\in N_i\Big\}\,,
\]
and the following sharp estimates hold:
\begin{eqnarray}\label{sharp 1}
  \|u_j\|_{C^0(N_i)}\le C\,\Big(\int_{M_j}|H_{M_j}|^p\Big)^{1/p}\,,
  \\\label{sharp 2}
  \max\Big\{\H^n(M_j)-\H^n(N_i),\|u_j\|_{W^{1,2}(N_i)}\Big\}\le C\,\Big(\int_{M_j}|H_{M_j}|^2\Big)^{1/2}\,,
\end{eqnarray}
for a constant $C=C(N,p)$.
\end{theorem*}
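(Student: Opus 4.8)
\emph{Overview and compactness.} The plan is to settle the no-bubbling conclusion by a varifold/current compactness argument in which accessibility supplies the decisive boundary barrier, and then to obtain the sharp estimates by writing $M_j$ as a normal graph over the limit surface and analyzing the resulting quasilinear elliptic equation, using strict stability for coercivity. To each $M_j$ I associate the multiplicity-one integral varifold $V_j:=\var(M_j,1)$ and, using orientability, an integral current $T_j=\llbracket M_j\rrbracket$ with $\partial T_j=\llbracket\Gamma_j\rrbracket$. Since $f_j\to\mathrm{id}$ in $C^1$ we have $\H^{n-1}(\Gamma_j)\le C$, so the first variation obeys $\|\delta V_j\|\le\int_{M_j}|H_{M_j}|\,d\H^n+C\le C'$; by Allard's compactness theorem, along a subsequence $V_j\weak V$ for an integral varifold $V$ which, because $\int_{M_j}|H_{M_j}|\,d\H^n\to0$, is stationary in $\R^{n+1}\setminus\Gamma$, while $T_j\weak T$ with $\partial T=\llbracket\Gamma\rrbracket$ and $\|T\|\le\|V\|$. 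In particular $\spt V$ lies in the convex hull of $\Gamma$.

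\emph{No bubbling.} The key point is to show $V=\var(N_i,1)$ for a single $i$. Fix $x$ in the positive-$\H^{n-1}$-measure subset of a component $\Gamma'$ realizing~\eqref{acc}; then $\spt V$ lies in the acute wedge $x+\{y\cdot\nu_1\ge0,\ y\cdot\nu_2\ge0\}$, so translating a hyperplane towards $\Gamma$ from a direction in the open cone opposite to that wedge produces a one-sided barrier touching $\spt V$ only at $x$, and a monotonicity/comparison argument gives $\Theta(V,x)\le\tfrac12$; since boundary monotonicity gives $\Theta(V,x)\ge\tfrac12$ at points of $\spt\partial V$, equality holds, and by Allard's boundary regularity theorem $V$ is, near $x$, a multiplicity-one embedded $C^{1,\alpha}$ minimal hypersurface with boundary $\Gamma$. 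As the accessible points fill a positive fraction of the connected $\Gamma'$, unique continuation of the smooth minimal surface they locally bound forces $\spt V$ to coincide near all of $\Gamma$ with a smooth embedded minimal surface carried by $V$ with multiplicity one. Finiteness and regularity of the Plateau problem now pin down $V$: $\R^{n+1}$ carries no compact stationary varifold without boundary, so every component of $\spt V$ has its boundary in $\Gamma$ and is therefore one of the $N_i$; two distinct $N_i$ cannot share a boundary component (again unique continuation), and an additional interior sheet would contradict either stationarity or multiplicity one along $\Gamma$. Hence $V=\var(N_{i_0},1)$ for a single index $i_0$, and since $N_{i_0}$ is connected $T=\pm\llbracket N_{i_0}\rrbracket$, whence $\mass(T)=\H^n(N_{i_0})=\|V\|(\R^{n+1})=\lim_j\H^n(M_j)$: no mass is lost. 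Then $T_j-T\weak0$, $\partial(T_j-T)=\llbracket\Gamma_j\rrbracket-\llbracket\Gamma\rrbracket\weak0$ and $\mass(T_j-T)\le C$, so $\flat(T_j-T)\to0$; writing $T_j-T=\partial R_j+S_j$ with $\mass(R_j)+\mass(S_j)\to0$ and letting $E_j$ be a mildly mollified finite-perimeter representative of $R_j$, we get $|E_j|=\mass(R_j)\to0$ and $\H^n\big(\partial E_j\setminus(N_{i_0}\cup M_j)\big)\le\mass(S_j)+o(1)\to0$, which is the no-bubbling assertion.

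\emph{Graphical reduction.} For the quantitative statements take $\Gamma_j=\Gamma$, $N:=N_{i_0}$ smooth and strictly stable, and $\sup_j\int_{M_j}|H_{M_j}|^p\,d\H^n<\infty$ with $p>n$; the $L^p$ bound puts the mean curvature in the Morrey class needed for Allard's interior and boundary regularity theorems, so — using the multiplicity-one convergence $V_j\weak\var(N,1)$ and that $N$ is smooth up to $\partial N=\Gamma$ — for $j$ large $M_j$ is the normal graph over $N$ of some $u_j\in C^{1,\alpha}(\overline N)$ with $u_j=0$ on $\partial N$ and $\|u_j\|_{C^1(N)}\to0$. Writing the prescribed-mean-curvature equation for normal graphs over $N$ in divergence form, with linearization at $0$ equal to the Jacobi operator $L_N=\Delta_N+|A_N|^2$, gives
\[
-L_N u_j=\widetilde H_j+\di_N(F_j)+G_j\ \text{ on }N,\qquad u_j=0\ \text{ on }\partial N,
\]
where $\widetilde H_j=(H_{M_j}\circ\Phi_j)\,J_j$ with $\Phi_j(x)=x+u_j(x)\,\nu_N(x)$, $J_j\to1$ in $C^0$, and $|F_j|+|G_j|\le C\,\|u_j\|_{C^1}\big(|\nabla u_j|+|u_j|\big)$. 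Strict stability says $B(\varphi,\varphi):=\int_N|\nabla\varphi|^2-|A_N|^2\varphi^2\ge\lambda\|\varphi\|_{L^2}^2$ on $H^1_0(N)$; combined with $B(\varphi,\varphi)\ge\|\nabla\varphi\|_{L^2}^2-\|A_N\|_{C^0}^2\|\varphi\|_{L^2}^2$ this yields $B(\varphi,\varphi)\ge c\,\|\varphi\|_{H^1}^2$, so $-L_N$ is boundedly invertible on $H^1_0(N)$ and $L^p$/Schauder estimates hold up to $\partial N$.

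\emph{The estimates, and the main obstacle.} For~\eqref{sharp 1}, rewrite the equation as $-L_Nu_j=\widetilde H_j-Q_j$ with $\|Q_j\|_{L^p(N)}\le C\,\|u_j\|_{C^1}\,\|u_j\|_{W^{2,p}(N)}$ (the quadratic remainder, second-order terms included); global $W^{2,p}$ regularity with $u_j|_{\partial N}=0$ and absorption of the $Q_j$-term (valid since $\|u_j\|_{C^1}\to0$) give $\|u_j\|_{W^{2,p}(N)}\le C\,\|\widetilde H_j\|_{L^p(N)}$, and then $W^{2,p}\hookrightarrow C^0$ (as $p>n$) together with $\|\widetilde H_j\|_{L^p(N)}^p\le(1+o(1))\int_{M_j}|H_{M_j}|^p\,d\H^n$ prove~\eqref{sharp 1}. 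For~\eqref{sharp 2}, test the weak equation with $u_j$ (the boundary term vanishes since $u_j|_{\partial N}=0$): $B(u_j,u_j)=\int_N\widetilde H_j\,u_j-\int_N F_j\cdot\nabla u_j-\int_N G_j\,u_j\le(1+o(1))\|u_j\|_{L^2}\big(\int_{M_j}|H_{M_j}|^2\big)^{1/2}+C\,\|u_j\|_{C^1}\,\|u_j\|_{H^1}^2$, and coercivity absorbs the last term to give $\|u_j\|_{W^{1,2}(N)}\le C\big(\int_{M_j}|H_{M_j}|^2\big)^{1/2}$; finally, since $N$ is minimal in $\R^{n+1}$ it is a critical point of the graph-area functional with second variation $B$, so $\H^n(M_j)-\H^n(N)=\tfrac12B(u_j,u_j)+O\big(\|u_j\|_{C^1}\,\|u_j\|_{H^1}^2\big)$, and bounding $B(u_j,u_j)\le C\int_{M_j}|H_{M_j}|^2$ as above (using the $W^{1,2}$ bound just proved) completes~\eqref{sharp 2}. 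I expect the real difficulty to be confined to the no-bubbling step: converting accessibility into a quantitative one-sided barrier that pins the boundary density to $1/2$, invoking Allard's boundary regularity correctly (mean curvature controlled only in an integral sense up to the smooth fixed $\Gamma$), and then globalizing multiplicity one along each component and identifying $V$ with a single $N_{i_0}$ — in particular excluding the cancellation scenario in which $T$ carries strictly less mass than $V$; by contrast, the reduction to a graph and the derivation of~\eqref{sharp 1}--\eqref{sharp 2} are comparatively routine once the $C^1$-small graphical picture is available.
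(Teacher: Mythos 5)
The graphical reduction and the derivation of \eqref{sharp 1}--\eqref{sharp 2} in your last two paragraphs are essentially sound and close to what the paper does in Section \ref{section decay rates} (the paper phrases the $W^{1,2}$ and area estimates through a deficit $\de(u)$ and a Taylor expansion of the area integrand rather than by testing the linearized PDE, and gets the $C^0$ bound from De Giorgi--Nash--Moser with $q>n/2$ rather than $W^{2,p}$ embedding, but these are equivalent routes). The genuine problem is in your no-bubbling step.

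Your central mechanism there --- ``containment of $\spt V$ in an acute wedge with vertex at an accessible point $x\in\Gamma$, plus a one-sided barrier, gives $\Theta(V,x)\le\tfrac12$, hence Allard boundary regularity and multiplicity one'' --- is false. Take $\Gamma$ the unit circle in $\{x_3=0\}\subset\R^3$ and $V=\var(D,2)$, twice the unit disk: this is stationary in $\R^3\setminus\Gamma$, its support lies in every wedge realizing accessibility at any $x\in\Gamma$, any hyperplane from outside the wedge touches it only along $\Gamma$, and yet $\Theta(V,x)=1$ at every boundary point. So no comparison argument based only on wedge containment and stationarity can distinguish multiplicity $1$ from multiplicity $2$; the boundary density of a stationary varifold in a half-space is not bounded by $\tfrac12$ (two half-planes through $\Gamma$ inside the wedge already give density $1$). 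The same example shows that your later claim ``an additional interior sheet would contradict either stationarity or multiplicity one along $\Gamma$'' is circular, and that ``two distinct $N_i$ cannot share a boundary component'' is simply wrong (in the two-circles example, the disk $D_1$ and the catenoid $K_3$ both have $\Gamma_1$ in their boundary). What is missing is the parity input that the paper extracts by comparing the two limits: by White's theorem the varifold limit $V$ and the current limit $T$ satisfy $V=V_T+2W$ for an integral varifold $W$, so the varifold multiplicities $q_{i,\ell}$ and the current multiplicities $\alpha_{i,\ell}$ agree mod $2$; since $\pa T=\llbracket\Gamma\rrbracket$ forces $\sum\sigma^m_{i,\ell}\alpha_{i,\ell}=1$ on each component $\Gamma_m$, the total varifold multiplicity $\sum q_i$ meeting $\Gamma_m$ is \emph{odd}. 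Only then does accessibility enter, and not through a barrier: the limiting conormal $\sigma=\sum q_i\,\nu_i^{\rm co}$ is a weak-$*$ limit of unit vectors, hence $|\sigma|\le1$ a.e.\ on $\Gamma_m$, while at an accessible point all the $\nu_i^{\rm co}$ point into a common acute cone, and an elementary lemma shows that $2p+1$ unit vectors in an acute cone sum to a vector of length $>1$ unless $p=0$. This forces $\sum q_i=1$. Your proposal contains neither the mod-$2$ comparison nor the conormal-sum lemma, and without them the multiplicity-two scenario is not excluded, so the no-bubbling conclusion does not follow.

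Separately, note that the paper's finiteness-and-regularity hypothesis is used at the very start to write $V=\sum_i q_i\var(N_i)$ with the prescribed structure; it is not a consequence of Allard boundary regularity (the paper explicitly warns that boundary regularity fails at density-one boundary points such as $\Gamma_1\subset D_1\cup K_3$), so your attempt to derive smoothness of $V$ near $\Gamma$ from regularity theory, rather than assume the structural decomposition, cannot work as written.
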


\begin{remark}
  {\rm As shown by simple examples (see Figure \ref{fig bubbling}), if accessibility from infinity fails then bubbling can occur in the convergence of $\{M_j\}_j$. In particular, $\{M_j\}_j$ could converge to a smooth minimal surface with multiplicity $2$, and some pieces of the limiting minimal surface could not be part of any minimal surface spanned by the whole $\Gamma$.}
\end{remark}

\begin{remark}
  {\rm In the case $M_j$ is the boundary of an open set (and thus, necessarily, $\Gamma=\emptyset$), and $M_j$ has {\it almost-constant} (non-zero) mean curvature, then the occurrence of bubbling is unavoidable, and its description has been undertaken in various papers, see e.g. \cite{breziscoron84,struwe84,ciraolomaggi2017,delgadinomaggimihailaneumayer,krummelmaggi,delgamaggi}. From this point of view, the fact that we can avoid bubbling under somehow generic assumptions on the boundary data $\Gamma$ is a remarkable rigidity feature of Plateau's problem.}
\end{remark}

\begin{remark}
  {\rm The finiteness and regularity assumption is well-illustated in the case when $\Gamma$ consists of two parallel unit circles in $\R^3$, having centers on a common axis. The idea here is that, depending on the distance between the circles, there should be at most five ``generalized'' minimal surfaces spanned by $\Gamma$ (see Figure \ref{fig catenoids}): two parallel disks, two catenoids (one stable, the other unstable), and two singular catenoids. Each singular catenoid is formed by attaching a smaller disk to two catenoidal necks so that the disk floats at mid distance from the two boundary circles, and the necks form three $120$-degrees angles along the circle. Notice that the floating circle does not count as a boundary curve, but rather as a curve of ``singular'' points. Observe that accessibility from infinity trivially holds in this case, while the validity of the finiteness and regularity assumption (which is formally introduced in section \ref{section finite plateau}) is not obvious, although it seems quite reasonable to expect it to be true. If that is the case, the compactness theorem indicates that a sequence of smooth almost-minimal surfaces spanned by $\Gamma$ (or with boundaries converging to $\Gamma$) must converge to one of these five minimal surfaces, without bubbling. Actually, a simple additional argument can be used to exclude that the singular catenoids are possible limits, see Remark \ref{rmk:classical_singularities}.}
\end{remark}

\begin{remark}
  {\rm Both estimates \eqref{sharp 1} and \eqref{sharp 2} are sharp. When $p=\infty$, \eqref{sharp 1} generalizes to arbitrary minimal surfaces the fact that an almost-minimal surface bounded by a circle deviates from a flat disk at most linearly in the mean curvature times the area of the disk. The interest of \eqref{sharp 2} is that the $L^2$-norm of the mean curvature appears as the dissipation of the area along a mean curvature flow with prescribed boundary data, see for example Huisken \cite{huiskenplateau} and Spruck \cite{spruck}. Moreover, we notice the close relation between \eqref{sharp 2} and the main result from \cite{dephilippismaggi}, which addresses the problem of proving global stability inequalities for smooth, area-minimizing surfaces. Finally, we remark that the bound on $\|H_{M_j}\|_{L^p(M_j)}$ for $p>n$ is needed to enforce the graphicality of $M_j$ over $N_i$ via Allard's regularity theorem. If one knows a priori that $M_j$ is a graph over $N_i$, then \eqref{sharp 1} can be proved for every $p\ge 2$ with $p>n/2$ (for example, $p=2$ works for two and three dimensional surfaces); see Theorem \ref{thm graphs} in section \ref{section decay rates} below.}
\end{remark}

The paper is organized as follows. In section \ref{section soap films with gravity} we discuss the equilibrium conditions for soap films with gravity, and derive \eqref{minimal surface with gravity h} under appropriate conditions. An interesting outcome of this discussion is the idea, based on physical grounds,  of formulating Plateau's problem {\bf as a singular capillarity problem}. Section \ref{section almost minimal surfaces} consists in part of a preliminary review of the necessary concepts from Geometric Measure Theory, and in part of a precise formulation of our two main assumptions. In section \ref{section proof of compactness thm} we give a precise statement and the proof of our main compactness result, see Theorem \ref{thm main}. Finally, in section \ref{section decay rates}, we explain the reduction to graph-like surfaces, and prove various sharp convergence estimates, see Theorem \ref{thm graphs}. These last results show that on graph-like surfaces one can work with a very weak notion of almost-minimality deficit, a fact that will likely prove useful in future investigations.

\bigskip

\noindent{\bf Acknowledgments.} F.M. and S.S. have been supported by the NSF Grants DMS-1565354, DMS-1361122 and DMS-1262411.

\section{Soap films with gravity}\label{section soap films with gravity} Due to gravitational forces, surfaces with small but non-zero mean curvature arise naturally in the study of soap films hanging on a wire. This effect is usually neglected in the mathematical literature, leading to an exclusive focus on minimal surfaces. The resulting model describes correctly the physical situation of small soap films. However, as noticed by Defay and Prigogine, ``gravitational forces [...] play a dominant role in determining the shapes of \emph{macroscopic} surfaces''; see \cite[Section I-4]{DefayPrigogineBOOK}. The typical length scale which separates small films from large films is given by the capillary length $\k^{-1}=\sqrt{\sigma/\rho g}$, introduced in \eqref{capillarity length}. For a solution of soap in water at room temperature, the values of the surface tension and of the density are, respectively, $\sigma\simeq 0.03$N/m and $\rho  \simeq 10^3$kg/m$^3$, while $g  \simeq 9.81$N/kg is Earth's gravity, so that the length-scale $\k^{-1}$ is of order of $1.7$mm. The deviation of a soap film with gravity from its limit minimal surface is expected to be $O(h\,\k)$ where $h$ is the average width of the film. For typical soap films, we are in the perturbative region, since we usually have $h\simeq 10^{-3} \mathrm{mm}\simeq 10^{-3}\k^{-1}$.

Idealizing the wire frame as a smooth curve $\Gamma$ without boundary in $\R^3$, and the soap film as a smooth surface $M$ bounded by $\Gamma$, if we neglect gravity then we are led to modeling soap films as minimal surfaces, i.e. surfaces with vanishing mean curvature
\begin{equation}
  \label{minimal surface}
  H_M=0\,.
\end{equation}
This condition is derived from balancing the atmospheric pressures on the two sides of the film with the {\it Laplace pressure} induced by surface tension \cite{Young1805,Laplace1806}. Denoting by $\s$ the surface tension, if $S$ is a small neighborhood of $x\in M$, with outer unit co-normal $\nu_S^M$ with respect to $M$, then the tension on $S$ is given by
\begin{equation}
  \label{from tension to pressure}
  \s\,\int_{\pa\,S}\nu_S^M=\s\,\int_S\,\HH_M\,.
\end{equation}
Here, $\HH_M$ denotes the mean curvature vector to $M$, which, once the choice of a unit normal $\nu_M$ to $M$ is specified, defines a scalar mean curvature $H_M$ appearing in \eqref{minimal surface} through the equation $\HH_M=H_M\,\nu_M$. If the atmospheric pressures on the two sides of the film are assumed to be equal, as it is the case if we ignore gravity, then the Laplace pressure must vanish, and we find \eqref{minimal surface}. Let us recall that \eqref{minimal surface} can also be derived by the principle of virtual works, as first done by Gauss \cite{Gauss1830}, by taking as the total energy of the film the area of $M$ times $\s$, namely
\begin{equation}
  \label{gauss no gravity}
  \E(M)=\s\,\H^2(M)\,.
\end{equation}
Equation \eqref{minimal surface} fails in describing macroscopic soap films in two ways:

\medskip

\noindent (i) For a given contour $\Gamma$, the minimal surfaces spanned by $t\,\Gamma$, for a scaling factor $t>1$, are simply obtained by scaling the minimal surfaces spanned by $\Gamma$. This is evidently not the case for real soap films, where there is a competition between the capillary length $\kappa^{-1}$ and the length-scale of the boundary curve $\Gamma$ in determining if a soap film is produced at all. From this point of view, $H_M=0$ fails completely at describing the macroscopic length-scales at which soap films are actually formed. Equation \eqref{minimal surface with gravity h}, namely $H_M=\k^2\,h\,\nu_M\cdot e_3+O(h^2)$, does not have this problem. Indeed, the solvability of a prescribed mean curvature equation $H_M=f$ with $\pa M =\Gamma$ requires a control on the size of $f$ in terms, for example, of $\H^2(M_{\Gamma})^{-1/2}$, where $M_\Gamma$ is the area-minimizing surface spanned by $\Gamma$; see, e.g., the papers by Duzaar and Fuchs \cite{duzaarfuchsMANU,duzaarfuschBUMI}. In particular, the solvability of \eqref{minimal surface with gravity h} with boundary condition $\pa M=\Gamma$ depends on the relative sizes of $\k^2\,h$ (which measures the physical properties of the soap solution) and of the length-scale of $\Gamma$.

\medskip

\noindent (ii) Equation \eqref{minimal surface} is invariant under rotations, while the effect of gravity is definitely anisotropic. For example, a soap film $M$ hanging from a circular frame $\Gamma$ of radius $r$ should be exactly a flat disk if $\Gamma$ is contained in a vertical plane, whereas it should possess a non-trivial curvature if $\Gamma$ is in horizontal position, with average vertical deviation from the flat disk of order $r^2\,H_M$. This deviation is observable depending on the length scale of $\Gamma$ and on $\k$. In the case of soap {\it bubbles}, where $H_M=0$ is replaced by $H_M$ constant, a deviation is experimentally observed and is substantial; see \cite[Figures 1 and 3]{Cohen2515}. The presence of the vertical component of $\nu_M$ makes indeed \eqref{minimal surface with gravity h} anisotropic, and, actually,  \eqref{minimal surface with gravity h} boils down to \eqref{minimal surface} only if $M$ is contained in a vertical plane.

\medskip

In order to take the effect of gravity into account, one might be tempted to add to the surface tension energy functional a term corresponding to the potential energy of the film, namely, to consider
\begin{equation}
  \label{gauss with gravity wrong}
\E(M) =  \s\,\H^2(M)+g\,\rho_*\int_M\,x_3\,dx
\end{equation}
in place of \eqref{gauss no gravity}, with $\rho_*$ denoting surface density of mass. While this would be correct for a solid elastic slab, or a rubber sheet, for a fluid it is clearly incorrect. In fact, it would amount to replace $H_M=0$ with the  equation $H_M(x)=\k^2\,x_3$, which would incorrectly predict that a soap film hanging from a perfectly planar wire contained in a vertical plane should have curvature and lie out of the plane!

In \cite[Section I.4]{DefayPrigogineBOOK}, Defay and Prigogine explain how the effect of gravity should be modeled by balancing pressures. One needs to consider the finite thickness of the film, bounded by two different interfaces, and to take into account the difference in hydrostatic pressures on the two faces caused by the gravitational pull. We now put into equations this idea, and formulate a PDE for the problem. The resulting PDE, see \eqref{minimal surface with gravity}, justifies \eqref{minimal surface with gravity h}, which, in turn, appears in the literature when $M$ is axially symmetric and very close (in a $C^1$-sense) to a plane; see e.g. \cite[Equation (2.5)]{deGennesBOOK}.

Consider a smooth two-dimensional surface $M$ bounded by a smooth curve $\Gamma$ in $\R^3$, and oriented by a unit normal $\nu_M$. Here $M$ plays the role of an ideal surface lying inside the film. Given a smooth function $\a$ defined on $M$, we denote its graph over $M$ by
\[
M(\a):=\Big\{x+\a(x)\,\nu_M(x):x\in M\Big\}\,.
\]
The two interfaces of the soap film are described by graphs $M(\a)$ and $M(-\beta)$ for positive functions $\a$ and $\beta$. Up to replacing $M$ with $M((\a-\b)/2)$, and then setting $\psi:=(\a+\b)/2$, we can actually assume that the interfaces are $M(\psi)$ and $M(-\psi)$, where $\psi$ is a smooth positive function on $M$. However, it does not seem that the symmetric parametrization is always the most convenient, so we shall argue in terms of $\a$ and $\beta$.

Given $x\in M$, and with reference to Figure \ref{fig laplace},
\begin{figure}
  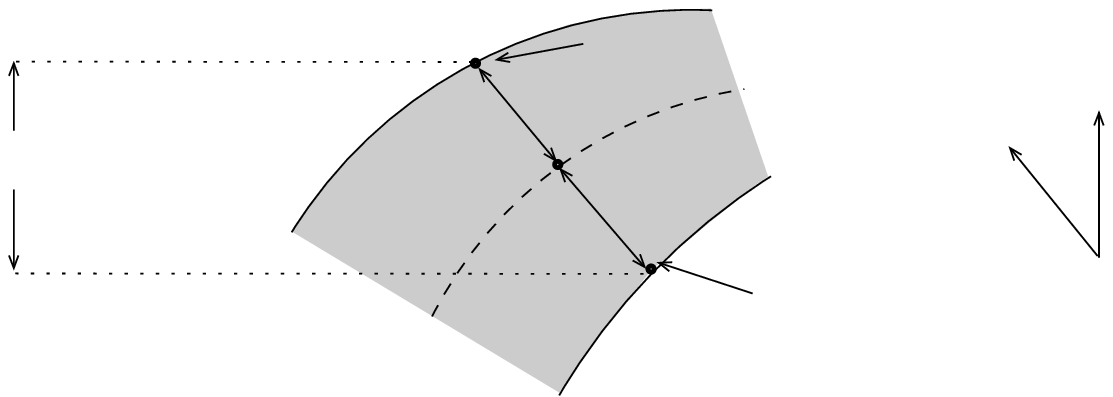\caption{{\small The derivation of \eqref{minimal surface with gravity}, after \cite[Section I.4]{DefayPrigogineBOOK}.}}\label{fig laplace}
\end{figure}
at equilibrium, the pressure $p(x^+)$ at $x^+:=x+\a(x)\,\nu_M(x)\in M(\a)$ is given by
\begin{equation}\label{laplace at x+}
p(x^+)=p_0-\s\,H_{M(\a)}(x^+)\,,
\end{equation}
where $H_{M(\a)}$ is the scalar mean curvature of $M(\a)$ with respect to the unit normal pointing {\it outside} the film, $p_0$ is the atmospheric pressure, and $\s$ is the surface tension. The pressure $p(x^-)$ at $x^-:=x-\beta(x)\,\nu_M(x)\in M(-\beta)$ is similarly given by
\begin{equation}
  \label{laplace at x-}
  p(x^-)=p_0-\s\,H_{M(-\beta)}(x^-)\,,
\end{equation}
where $H_{M(-\beta)}$ is the scalar mean curvature of $M(-\beta)$ with respect to the unit normal pointing {\it outside} of the film. Subtracting the two equations, we obtain
\[
H_{M(\a)}(x^+)-H_{M(-\beta)}(x^-)=\frac{p(x^-)-p(x^+)}\s\,.
\]
The difference between $p(x^-)$ and $p(x^+)$ is the hydrostatic pressure
\begin{equation}
  \label{hydrostatic pressure}
  \begin{split}
  &p(x^-)-p(x^+)=g\,\rho\,(x^+-x^-)\cdot e_3=g\,\rho\,(\a(x)+\beta(x))\,\nu_M^{(3)}(x)\,,
  \\
  &\mbox{where}\quad \nu_M^{(3)}:=\nu_M\cdot e_3\,.
  \end{split}
\end{equation}
Combining \eqref{laplace at x+}, \eqref{laplace at x-} and \eqref{hydrostatic pressure} we obtain the equation for minimal surfaces with gravity
\begin{equation}
  \label{minimal surface with gravity}
  H_{M(\a)}(x^+)-H_{M(-\beta)}(x^-)=\k^2\,(\a(x)+\beta(x))\,\nu_M^{(3)}(x)\,,\qquad\forall x\in M\,.
\end{equation}
If $|\nabla\a|$ and $|\nabla\b|$ are sufficiently small at $x$, and we consider the mid-surface parametrization, then we can assume that locally $\a\equiv\beta\equiv h$, where $h$ is a small positive constant. Denoting by $\{\k_1,\k_2\}$ the principal curvatures of $M$, and stressing the smallness of $h$ by requiring $0<h<\max\{|\k_1|,|\k_2|\}^{-1}$, we thus obtain
\begin{eqnarray*}
H_{M(\a)}(x^+)&=&\sum_{i=1}^2\frac{\k_i(x)}{1+h\,\k_i(x)}=H_M(x)-h\,\sum_{i=1}^2\k_i^2+O(h^2)\,\sum_{i=1}^2\k_i^3\,,
\\
H_{M(-\beta)}(x^-)&=&-\sum_{i=1}^2\frac{\k_i(x)}{1-h\,\k_i(x)}=-H_M(x)-h\,\sum_{i=1}^2\k_i^2+O(h^2)\,\sum_{i=1}^2\k_i^3\,,
\end{eqnarray*}
and \eqref{minimal surface with gravity} is readily seen to imply
\[
H_M(x)=\k^2\,h\,\nu_M^{(3)}(x)+O(h^2)\qquad\forall x\in M\,,
\]
that is, \eqref{minimal surface with gravity h}.

\bigskip

We now explain how \eqref{minimal surface with gravity} can be derived from energy considerations. The idea is treating the problem of a soap film hanging from a wire frame as a capillarity problem. We model the wire frame as a solid $\de$-neighborhood of an idealized curve $\Gamma$, setting
\[
\Gamma_\de:=\Big\{x\in\R^3:\dist(x,\Gamma)\le\de\Big\}\,,\qquad A_\de:=\R^3\setminus\Gamma_\de\,.
\]
We model the soap film as a set $E\subset A_\de$ with very small volume $\e=|E|$, and, following Gauss' treatment of capillarity theory, we define its energy as
\[
\E(E)=\s\,\H^2\big(A_\de\cap\pa E\big)+\s\,\gamma\,\H^2(\pa A_\de\cap\pa E)+g\,\rho\,\int_E\,x_3\,dx\,,
\]
see
\begin{figure}
  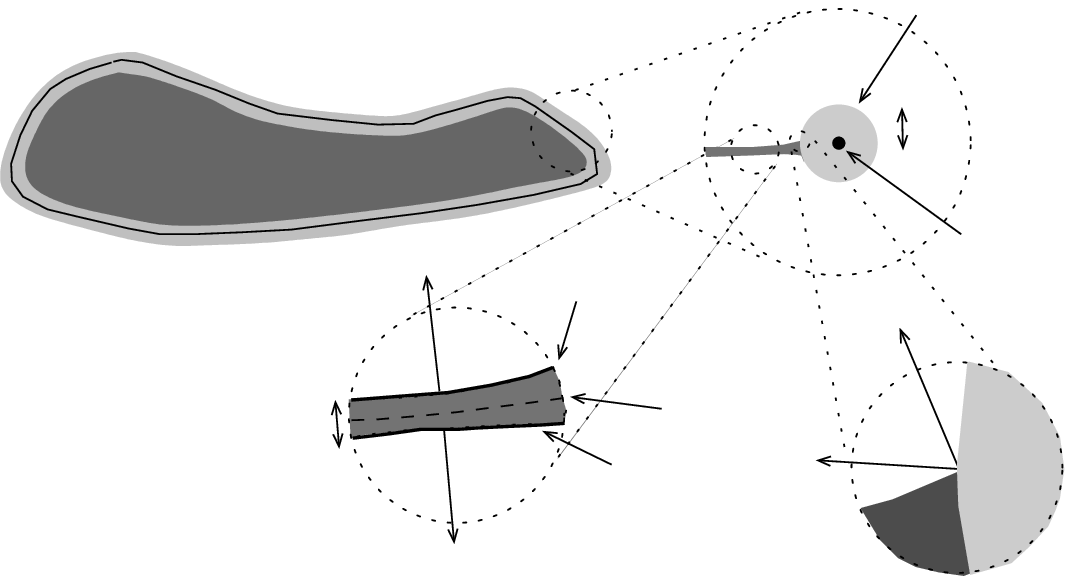\caption{{\small Using Gauss' capillarity energy to formulate Plateau's problem. Minimization of $\s\,\H^2(M)$ among surfaces with $\pa M=\Gamma$ is replaced by minimizing the capillarity energy among regions contained in the complement of a $\de$-neighborhood of $\Gamma$. Equilibrium configurations with volume $\e\ll\de\,\H^2(S)\ll1$ arise as normal neighborhoods of minimal surfaces spanned by $\Gamma$. Here $S$ denotes the boundary of $E$ away from the wire frame.}}\label{fig capillarity}
\end{figure}
Figure \ref{fig capillarity}. Here $\gamma\in(-1,1)$ is a dimensionless parameter taking into account the ratio between the surface tension on the liquid-air interface, and the surface tension on the liquid-solid interface along the wire frame walls. Assuming that $E$ is a smooth critical point of this energy, the Euler-Lagrange equations boil down to the equilibrium condition
\begin{equation}
  \label{gauss equilibrium}
  \s\,H_E(y)+\rho\,g\,y_3=\l\qquad\forall y\in S=A_\de\cap\pa E\,,
\end{equation}
where $H_E$ denotes the scalar mean curvature of $\pa E$ with respect to the {\it outer} unit normal to $E$, and $\l$ is a Lagrange multiplier associated to the volume constraint. Equation \eqref{gauss equilibrium} is coupled with {\it Young's law},
\begin{equation}
  \label{youngs law}
  \nu_E(y)\cdot\nu_{\Gamma_\de}(y)=\gamma\qquad \forall y\in \ov{S}\cap\pa A_\de\,.
\end{equation}
Under the assumption that
\[
\frac{\e}{\H^2(S)}\ll\de
\]
and that $\de$ is sufficiently small in terms of the local and global geometric properties of $\Gamma$, it is reasonable to expect the existence of critical points $E$ described by means of mid-surfaces $M$ spanned by $\Gamma$. More precisely, we consider critical points $E$ corresponding to surfaces $M$ with $\pa M=\Gamma$ in the sense that, for every $x\in M\cap A_\de$ we can find $r>0$ such that
\[
E\cap B_r(x)=\Big\{z+t\,\nu_M(z):z\in M\,,-\beta(z)<t<\a(z)\Big\}\,.
\]
In this case, \eqref{gauss equilibrium} computed at $y=x+\a(x)\,\nu_M(x)=x^+$ and at $y=x-\b(x)\,\nu_M(x)=x^-$ gives
\begin{equation}
  \label{el eqn}
  \s\,H_{M(\a)}(x^+)+\rho\,g\,x^+\cdot e_3=\l\,,\qquad \s\,H_{M(-\b)}(x^-)+\rho\,g\,x^-\cdot e_3=\l\,.
\end{equation}
Notice that our sign conventions on scalar mean curvatures have been such that $H_E(x^+)=H_{M(\a)}(x^+)$ and $H_E(x^-)=H_{M(-\b)}(x^-)$. Subtracting the two equations we deduce indeed the validity of \eqref{minimal surface with gravity} as a consequence of the equilibrium condition for Gauss' capillarity energy. Notice that the full set of equilibrium conditions is expressed by considering Young's law together with the two equations \eqref{el eqn}, or with the single equation \eqref{gauss equilibrium}, rather than by \eqref{minimal surface with gravity} alone. Here the role of \eqref{minimal surface with gravity} is stressed because, as explained above, it clearly motivates the study of surfaces with small mean curvature.

\bigskip

In summary, we have seen in this section how surfaces with prescribed boundary and small mean curvature, such as the ones described by equation \eqref{minimal surface with gravity}, or by its approximation \eqref{minimal surface with gravity h}, arise naturally in the study of soap films hanging from a wire. More generally, the use of capillarity theory to model soap films provides an additional, more physical, point of view on the long-debated issue of prescribing boundary data in the mathematical formulation of Plateau's problem; see \cite{jenny,davidguy,harrisonpughADVCALCVAR,dPdRghira,delellisghiraldinmaggiJEMS,giusteri,delederosaghira,bellettini,fangkola,derosaSIAM} for the most recent developments on this venerable question. Leaving a more complete discussion of this last point to a forthcoming paper, we focus here on a first problem raised by this approach, namely understanding the relation between almost-minimal and minimal surfaces.


\section{Almost-minimal surfaces}\label{section almost minimal surfaces} Let $\Gamma$ be a compact $(n-1)$-dimensional surface in $\R^{n+1}$ without boundary. Motivated by the study of surfaces obeying \eqref{minimal surface with gravity h}, we now consider the general question of understanding the relation between the minimal and the almost minimal surfaces spanned by $\Gamma$.
The question we want to address is the following:
\begin{equation}
  \label{question}
  \begin{split}
  &\mbox{In the class of surfaces spanned by $\Gamma$}
  \\
  &\mbox{is the family of minimal surfaces rich enough}
  \\
  &\mbox{to describe all the possible limits of almost-minimal surfaces?}
\end{split}
\end{equation}
Theorem \ref{thm main} answers affirmatively to this question under the assumptions that $\Gamma$ is accessible from infinity and spans finitely many minimal surfaces without boundary singularities. The statement of the theorem is actually quite delicate, as it involves several choices and assumptions. In the following paragraphs we shall address these points. In {\S\, \ref{section deficits}} we propose various ways of measuring the almost-minimality of a surface, while in {\S\, \ref{section convergence}} we review two notions of convergence for smooth surfaces arising in Geometric Measure Theory. In {\S\, \ref{section infinity}} we discuss our geometric assumption on the connected components of $\Gamma$, and in {\S\, \ref{section finite plateau}} we make precise the idea that $\Gamma$ spans at most finitely many minimal surfaces.

\subsection{Measuring almost-minimality}\label{section deficits} Directly motivated by the equation for minimal surfaces with gravity \eqref{minimal surface with gravity h}, we shall consider the {\bf uniform deficit}
\[
\de_\infty(M):=\|H_M\|_{C^0(M)}
\]
as our chief option to measure almost-minimality. But depending on other possible applications of almost-minimal surfaces, the family of {\bf integral  deficits}
\[
\de_p(M):=\|H_M\|_{L^p(M)}\,,\qquad 1\le p<\infty\,,
\]
may be more relevant. For example, $\de_2(M)$ definitely plays a role in the study of the gradient flow defined by Plateau's problem, see \cite{huiskenplateau,spruck}. At the weaker end of the spectrum, and closer to the point of view usually adopted when discussing Paley-Smale sequences in variational problems, one may consider the {\bf duality deficits}
\[
\de_{-p}(M):=\sup\Big\{\int_M\,\Div^M X\,d\H^n:X\in C^1_c(\R^{n+1}\setminus\Gamma;\R^{n+1})\,,\|\nabla X\|_{L^p(\R^{n+1})}\le 1\Big\}\,,
\]
for $1\le p\le\infty$. This last definition is motivated by the tangential divergence theorem, stating that if $M$ is a smooth compact $n$-dimensional surface with boundary $\Gamma$, then
\begin{equation}
  \label{tangential divergence theorem}
  \int_M\,\Div^M X\,d\H^n=\int_M\,X\cdot\HH_M\,d\H^n+\int_\Gamma\,X\cdot\nu_\Gamma^M\,d\H^{n-1}\qquad\forall X\in C^1_c(\R^{n+1};\R^{n+1})\,.
\end{equation}
Here $\nu_\Gamma^M$ is outer unit co-normal to $\Gamma$ with respect to $M$, and $\Div^MX$ is the tangential divergence of $X$ with respect to $M$, that is
\begin{equation}
  \label{tangential divergence of X}
  \Div^MX(x):=\Div X(x)-\nu_M(x)\cdot\nabla X(x)[\nu_M(x)]\qquad\forall x\in M\,.
\end{equation}
An interesting fact is that on surfaces $M$ that are a priori known to be graphs over strictly stable minimal surfaces, the duality deficit $\de_{-\infty}(M)$ already controls the area deficit, see Theorem \ref{thm graphs}.

\subsection{Convergence of smooth surfaces} \label{section convergence} In order to provide a better insight into question \eqref{question}, we need to discuss possible notions of limit for a sequence of smooth surfaces. To introduce the relevant ideas, let us consider a sequence $\{M_j\}_j$ of smooth oriented $n$-dimensional surfaces such that
\begin{equation}
  \label{Mj assumption}
  \pa M_j=\Gamma\,,\qquad \sup_j\,\H^n(M_j)<\infty\,,\qquad\sup_j\sup_{x\in M_j}|x|<\infty\,,\qquad\lim_{j\to\infty}\de_\infty(M_j)=0\,.
\end{equation}
Geometric Measure Theory provides two canonical ways to discuss the convergence of such a sequence $\{M_j\}_j$. Both approaches require the identification of each $M_j$ as a linear functional on a space of test functions, or, equivalently, as a Radon measure on a suitable finite dimensional space. The first approach, the theory of currents, allows to transfer the spanning information $\pa M_j=\Gamma$ to a generalized limit surface. The second approach, the theory of varifolds, allows to infer from $\de_\infty(M_j)\to 0$ the existence of a limit surface that is minimal, again in a generalized sense. A subtlety lies in the fact that the generalized limit surface in the varifold sense may be larger that its counterpart in the sense of currents.

\medskip

\noindent {\it The viewpoint of currents}. We see each oriented surface $M_j$ in \eqref{Mj assumption} as a linear continuous functional $\llbracket M_j \rrbracket$ on the space $\D^n(\R^{n+1})$ of smooth, compactly supported $n$-dimensional differential forms, equipped with the standard topology of test functions. More precisely, if $M_{j}$ is oriented by a continuous choice of a unit normal vector field $\nu_{M_j}$, we set
\[
\langle\llbracket M_j \rrbracket,\omega\rangle:=\int_{M_j}\langle\star\nu_{M_j}(x),\omega(x)\rangle\,d\H^n(x)\,\qquad\forall\om\in\D^n(\R^{n+1})\,,
\]
where, given $\nu\in\SS^n$, $\star\nu$ denotes the simple unit $n$-vector corresponding to the $n$-dimensional plane $\nu^\perp$ oriented by $\nu$, and the duality between $n$-vectors and $n$-covectors appears under the integral. Let us recall that $\star\nu_{M_j}$ induces a smooth orientation $\tau_\Gamma$ on $\Gamma$ (that is, a smooth field of simple unit $(n-1)$-vectors defining and orienting the tangent planes to $\Gamma$) in such a way that Stokes' theorem holds
\begin{equation}
  \label{stokes theorem}
  \int_{M_j}\langle\star\nu_{M_j},d\omega\rangle\, d\H^n=\int_{\Gamma}\langle\tau_\Gamma,\omega\rangle\, d\H^{n-1}\qquad\forall\omega\in\D^{n-1}(\R^{n+1})\,,
\end{equation}
where $d\omega$ is the exterior differential of the $(n-1)$-form $\omega$. In this setting, it is quite natural to define the ``boundary'' of $\llbracket M_j \rrbracket$ as the linear continuous functional defined on $\D^{n-1}(\R^{n+1})$ by setting
\begin{equation}
  \label{boundary Mj}
\langle\pa\llbracket M_j \rrbracket,\omega\rangle :=\langle \llbracket M_j \rrbracket,d\omega\rangle\,\qquad\forall\om\in\D^{n-1}(\R^{n+1})\,.
\end{equation}
Of course, Stokes' theorem \eqref{stokes theorem} implies that if $\Gamma$ is oriented by the orientation $\tau_\Gamma$ induced by the choice of $\nu_{M_j}$ then
\[
\pa\llbracket M_j \rrbracket=\llbracket \Gamma \rrbracket\,.
\]
The second and the third condition in \eqref{Mj assumption} and the compactness theorem for Radon measures imply the existence of a linear continuous functional $T$ on $\D^n(\R^{n+1})$ such that, up to extracting subsequences,
\begin{equation}
  \label{who is T}
  \langle T,\omega\rangle=\lim_{j\to\infty}\int_{M_j}\langle\star\nu_{M_j},\omega\rangle\,d\H^n\,\qquad\forall\om\in\D^n(\R^{n+1})\,.
\end{equation}
Is the linear functional $T$ still represented by the action on forms of an oriented surface with boundary, like the functionals $\llbracket M_j \rrbracket$ are? A deep theorem of Federer and Fleming \cite{federerfleming60} gives a positive answer, provided that we introduce a suitable class of generalized surfaces with boundary. The key notion here is that of a {\it rectifiable set}. We say that a Borel set $N\subset\R^{n+1}$ is {\bf locally $\H^n$-rectifiable} if, up to a $\H^n$-null set, $N$ can be covered by countably many Lipschitz images of $\R^n$ into $\R^{n+1}$, and if $\H^n(N\cap B_R)<\infty$ for every $R>0$. If $N$ is locally $\H^n$-rectifiable, then $N$ has a tangent plane almost-everywhere, in the sense that for $\H^n$-a.e. $x\in N$ there exists an $n$-dimensional linear subspace $T_{x}N \subset \R^{n+1}$ such that
\begin{equation}
  \label{approximate tangent plane}
  \lim_{r\to 0^+}\int_{(N-x)/r}\vphi\,d\H^n=\int_{T_xN}\vphi\,d\H^n\qquad\forall \vphi\in C^0_c(\R^{n+1})\,.
\end{equation}
We can thus define a Borel vector field $\nu_{N}$ on $N$ such that $\nu_N(x)^\perp=T_xN$ at $\H^n$-a.e. $x \in N$. Analogously to the smooth setting, such a vector field $\nu_N$ will be called an {\it orientation} of the rectifiable set $N$. Coming back to \eqref{who is T}, the Federer--Fleming compactness theorem shows the existence of a locally $\H^n$-rectifiable set $N$, of a Borel measurable orientation $\nu_N$, and of a function $\alpha \in L^{1}_{loc}(\H^n \llcorner N; \Z)$ (an integer-valued {\it multiplicity} on $N$) such that $T=\llbracket N, \star \nu_N, \alpha \rrbracket$, i.e.
\begin{equation}
  \label{T is N alfa}
  \langle T,\om\rangle=\int_N\,\alpha(x)\,\langle\star\nu_N(x),\omega(x)\rangle\,d\H^n(x)\,\qquad\forall\omega\in \D^n(\R^{n+1})\,.
\end{equation}
Moreover, as a simple by-product of \eqref{boundary Mj}, we see that the limit current $T$ has still boundary $\Gamma$, in the sense that $\pa T=\llbracket\Gamma\rrbracket$, or, more explicitly:
\begin{equation}
  \label{partial T is}
  \langle T,d\om\rangle=\langle\llbracket\Gamma\rrbracket,\om\rangle\,\qquad\forall\omega\in \D^{n-1}(\R^{n+1})\,.
\end{equation}

\bigskip

\noindent {\it The viewpoint of varifolds}. The next question is if the rectifiable set $N$, found by taking the limit of $\{M_j\}_j$ in the sense of currents, is minimal, at least in some generalized sense. The starting point is the tangential divergence theorem applied on $M_j$ to fields supported away from $\Gamma$, which yields
\begin{equation}
  \label{lhs rhs}
\int_{M_j}\Div^{M_j}X\,d\H^n=\int_{M_j}X\cdot\HH_{M_j}\,d\H^n\qquad\forall X\in C^1_c(\R^{n+1}\setminus\Gamma;\R^{n+1})\,.
\end{equation}
Notice that, since $\de_\infty(M_j)\to 0$, the right-hand side of \eqref{lhs rhs} converges to zero as $j\to\infty$. To pass to the limit on the left-hand side we adopt the following point of view. Let us set
\[
G^n:=\R^{n+1}\times(\SS^n/\equiv)\,,
\]
where $\nu_1\equiv \nu_2$ if and only if $\nu_1=\pm\nu_2$, and denote by $[\nu]$ the $\equiv$-equivalence class of $\nu\in\SS^n$. The point $(x,[\nu])\in G^n$ identifies the (unoriented) $n$-dimensional affine plane orthogonal to $\nu$ and passing through $x$ in $\R^{n+1}$. Given $X\in C^1_c(\R^{n+1};\R^{n+1})$ we can define
\[
\vphi_X\in C^0_c(G^n)
\]
by setting
\[
\vphi_X(x,[\nu]):=\Div X(x)-\nu\cdot\nabla X(x)\nu\qquad (x,[\nu])\in G^n\,.
\]
The definition is well-posed, as the right-hand side is invariant when exchanging $\nu$ with $-\nu$. In this way
\[
\int_{M_j}\Div^{M_j}X\,d\H^n=\langle \var(M_j),\vphi_X\rangle
\]
if we agree to associate every smooth surface $M$ with a linear functional $\var(M)$ on $C^0_c(G^n)$ by setting
\begin{equation}
  \label{M as a varifold}
  \langle \var(M),\vphi\rangle :=\int_M\,\vphi(x,[\nu_M(x)])\,d\H^n(x)\qquad\forall\vphi\in C^0_c(G^n)\,.
\end{equation}
Notice that $M$ does not need to be orientable here, as we are considering $[\nu_M(x)]$ in \eqref{M as a varifold}. Clearly, $\var(M)$ can be seen as a Radon measure on $G^n$, with total mass equal to $\H^n(M)$. Thus, under the assumptions in \eqref{Mj assumption}, $\{\var(M_j)\}_j$ is a bounded sequence of Radon measures with uniformly bounded supports, so that the standard compactness theorem for Radon measures ensures the existence of a Radon measure $V$ on $G^n$ such that, up to extracting subsequences,
\begin{equation}
  \label{who is V}
\langle V,\vphi\rangle=\lim_{j\to\infty}\int_{M_j}\,\vphi(x,[\nu_{M_j}(x)])\,d\H^n(x)\qquad\forall\vphi\in C^0_c(G^n)\,.
\end{equation}
Given that $\de_\infty(M_j)\to 0$, the above argument shows that $\langle V,\vphi_X\rangle=0$ for every $X$ compactly supported in the complement of $\Gamma$. We then ask the question whether the varifold $V$ can be associated to a generalized surface, and to what extent this surface is minimal. Another deep theorem, this time due to Allard \cite{Allard}, provides the following answer: there exists a locally $\H^n$-rectifiable set $N$ and a function $\theta \in L^{1}_{loc}(\H^{n} \llcorner N;\N)$ (a non-negative integral {\it multiplicity} on $N$) such that $V$ is represented by $N$ and $\theta$, in symbols $V=\var(N,\theta)$, in the sense that
\begin{equation}
  \label{V is q N}
  \langle V,\vphi\rangle=\langle\var(N,\theta),\vphi\rangle=\int_{N}\,\theta(x)\,\vphi(x,[\nu_{N}(x)])\,d\H^n(x)\,\qquad\forall\vphi\in C^0_c(G^n)\,.
\end{equation}
As noticed, under the assumption \eqref{Mj assumption}, we have $\langle V,\vphi_X\rangle=0$ whenever $\spt X\cap\Gamma=\emptyset$. In other words, the varifold $V=\var(N,\theta)$ is {\it minimal} on $\R^{n+1}\setminus\Gamma$ (or {\it stationary}, in the common terminology of Geometric Measure Theory), in the sense that
\begin{equation}
  \label{minimal varifold}
  \int_{N} \theta\,\Div^{N} X\,d\H^n=0\qquad\forall X\in C^1_c(\R^{n+1}\setminus\Gamma;\R^{n+1})\,.
\end{equation}
Two remarks are in order: (i) The rectifiable set $N$ arising in the varifold convergence is in general {\it larger} than the rectifiable set $N$ obtained by taking the limit of $\{M_j\}_j$ in the sense of currents. The typical example is obtained by considering $M_j=B_1\cap(K/j)$ (for $j\to\infty$) where $K$ is a fixed catenoid. In this case the limit in the sense of currents is trivial, $N=\emptyset$, because the two sheets of the catenoid cancel out in the limit due to their opposite orientations; at the same time, if the limit is taken in the sense of varifolds, $N$ is equal to a unit disk with multiplicity $\theta=2$. For an example with fixed boundary data, see Example \ref{current vs varifold} below. From this point of view, answering question \eqref{question} partly amounts to \emph{determine conditions under which this ambiguity between the two limits, one taken in the sense of currents and the other in the sense of varifolds, does not occur};  (ii) Coming back to the generalized minimal surface condition \eqref{minimal varifold}, in the next classical example we notice how this condition allows one to include in the theory of minimal surfaces non-smooth examples that are actually physically relevant.

\begin{example}\label{example two rings part 1}
  {\rm Let $\Gamma=\Gamma_1\cup\Gamma_2$ be given by two parallel circles in $\R^3$ with centers on a same axis.
  \begin{figure}
    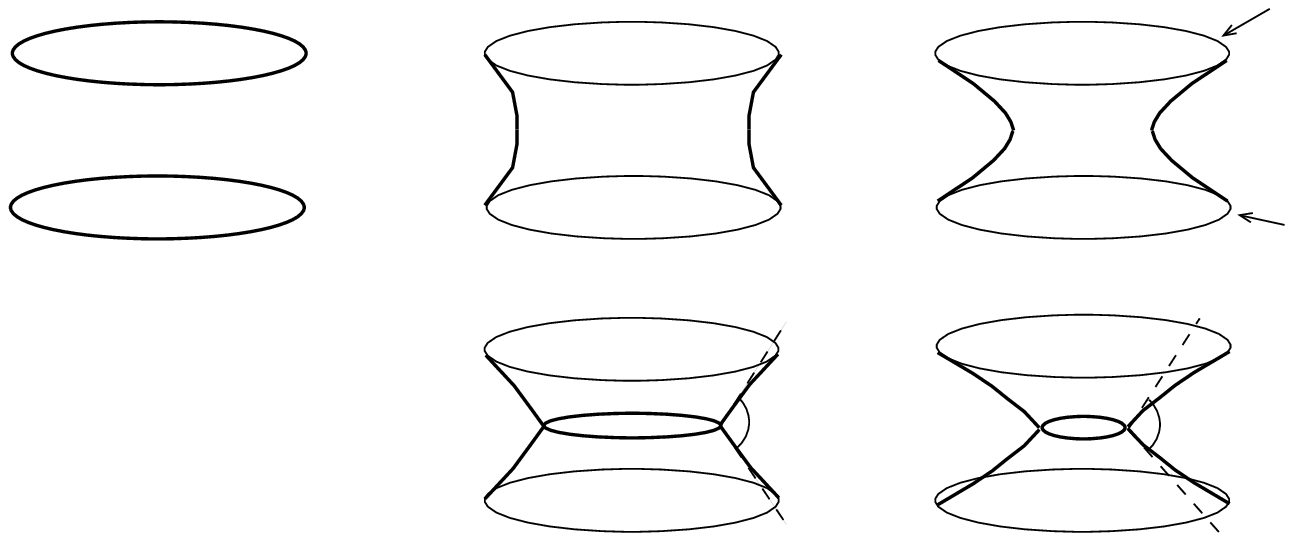\caption{{\small When $\Gamma$ consists of two parallel disks there are, in addition to the disconnected surface defined by two disks, four minimal surfaces, two of them singular, all composed by joining pieces of catenoids.}}\label{fig catenoids}
  \end{figure}
  We can construct generalized minimal surfaces on $\R^3\setminus\Gamma$ as multiplicity one varifolds $\var(N_i):=\var(N_i,1)$, associated to the rectifiable sets
  \begin{eqnarray*}
  &&N_1:=D_1\cup D_2\,,\qquad  N_2:=K_3\,,\qquad N_3:=K_4\,,
  \\
  &&N_4:=K_5\cup K_6\cup D_7\,,\quad N_5=K_8\cup K_9\cup D_{10}\,,
  \end{eqnarray*}
  depicted in Figure \ref{fig catenoids}, and referring to the following list of connected minimal surfaces:
  \begin{equation}\nonumber
    \begin{split}
      &\mbox{$D_1$ and $D_2$ are two disks spanned by $\Gamma_1$ and $\Gamma_2$ resp.}\,;
      \\
      &\mbox{$K_3$ and $K_4$ are the catenoids (one stable, the other unstable) spanned by $\Gamma$}\,;
      \\
      &\mbox{$K_5$ and $K_6$ are two catenoids meeting at a $2\pi/3$-angle along a circle $\Gamma_3$}
      \\
      &\qquad\qquad\mbox{lying on the midplane between $\Gamma_1$ and $\Gamma_2$, centered along the same axis}\,;
      \\
      &\mbox{$D_7$ is the disk spanned by $\Gamma_3$}\,;
      \\
      &\mbox{$K_8$ and $K_9$ are another pair of catenoids meeting at a $2\pi/3$-angle along a circle $\Gamma_4$}
      \\
      &\qquad\qquad\mbox{lying on the midplane between $\Gamma_1$ and $\Gamma_2$, centered along the same axis,}
      \\
      &\qquad\qquad\mbox{with the radius of $\Gamma_4$ smaller than the radius of $\Gamma_3$;}
      \\
      &\mbox{$D_{10}$ is the disk spanned by $\Gamma_4$}\,.
    \end{split}
  \end{equation}
  We claim that the $\var(N_i)$'s are generalized minimal surfaces. Since $N_4$ and $N_5$ are not smooth, we need to check carefully if they satisfy \eqref{minimal varifold}. By applying the tangential divergence theorem separately on the three minimal surfaces $K_5$, $K_6$ and $D_7$, we find that
  \[
  \int_{N_4}\,\Div^{N_4} X\,d\H^2=\int_{\Gamma_3} X\cdot\big(\nu_{\Gamma_3}^{K_5}+\nu_{\Gamma_3}^{K_6}+\nu_{\Gamma_3}^{D_7}\big)\,d\H^1\,.
  \]
  The sum of the above three co-normals is identically zero by the $2\pi/3$-angle condition imposed on $K_6$ and $K_7$, and so \eqref{minimal varifold} holds, thus showing that $N_4$ is minimal. The minimality of $N_5$ follows analogously. We also notice that every integer valued combination
  \begin{equation}
    \label{a1}
      V=\sum_{i=1}^5 q_i\,\var(N_i) \qquad \mbox{for some $q_i \in \N$}
  \end{equation}
  satisfies \eqref{minimal varifold}, and is thus a possible limit for a sequence $\{M_j\}_j$ satisfying \eqref{Mj assumption} with $\Gamma=\Gamma_1\cup\Gamma_2$. If such a limit arises with $\sum_i q_i\ge 2$, we speak of {\bf bubbling}. In fact, an additional subtlety lies in the fact that varifolds of the form
  \begin{equation}
    \label{a2}
      V=q_{1,1}\,\var(D_1)+q_{1,2}\,\var(D_2)+\sum_{i=2}^5 q_i\,\var(N_i)\qquad\mbox{with $q_{1,1}\ne q_{1,2}$}
  \end{equation}
  satisfy \eqref{minimal varifold}, and thus can arise as limits of almost-minimal surfaces (and indeed do so, see Example \ref{example delta meno infinito} below, if the mean curvature deficit is sufficiently weak). A limit like \eqref{a2} is qualitatively worse than a limit of the form \eqref{a1}, in the sense that $D_1$ and $D_2$ alone do not span the whole $\Gamma$, but just some of its connected components.}
\end{example}



\subsection{A geometric assumption: accessibility from infinity}\label{section infinity} Given $x\in\Gamma$, we say that $\Gamma$ is {\bf accessible from infinity at $x$} if there exist a unit vector $e$ and an angle $\theta\in[0,\pi)$ such that
\begin{equation}
  \label{accessible at x}
  \Gamma^{{\rm co}}\subset x+\Big\{z\in\R^{n+1}:z\cdot e\ge \frac{|z-(z\cdot e)e|}{\tan(\theta/2)}\Big\}\,,
\end{equation}
where $\Gamma^{{\rm co}}$ denotes the convex envelope of $\Gamma$. Notice that if \eqref{accessible at x} holds at a given $x$ then every minimal surface $N$ spanned by $\Gamma$ is automatically contained in the wedge centered at $x$ which appears on the right hand side of \eqref{accessible at x}.

\begin{definition}
  \label{definition totally}{\rm We say that $\Gamma$ is {\bf accessible from infinity} if, for each connected component $\Gamma_m$ of $\Gamma$, the set of points $x\in\Gamma_m$ such that $\Gamma$ is accessible from infinity at $x$ has {\it positive} $\H^{n-1}$-measure.}
\end{definition}

\begin{remark}
{\rm (i) Notice that $\Gamma$ does not need to be accessible at each of its points, we are just requiring that points of access have positive $\H^{n-1}$-measure inside each connected component of $\Gamma$; (ii) if $\Gamma$ is contained in the boundary of a uniformly convex set $K \subset \R^{n+1}$, then $\Gamma$ is accessible from infinity; the two conditions, on the other hand, are by no means equivalent, recall Figure \ref{fig accessible}.}
\end{remark}

Without $\Gamma$ being accessible we can easily construct examples where question \eqref{question} has negative answer, even if we intend almost-minimality in the strongest form defined by the uniform deficit.

\begin{example}[Negative answer to \eqref{question} and bubbling with uniform deficit] \label{current vs varifold}
  {\rm  Consider two concentric disks $S_1$ and $S_2$ contained inside a same plane, and bounded by circles $\Gamma_1$ and $\Gamma_2$, see
  \begin{figure}
  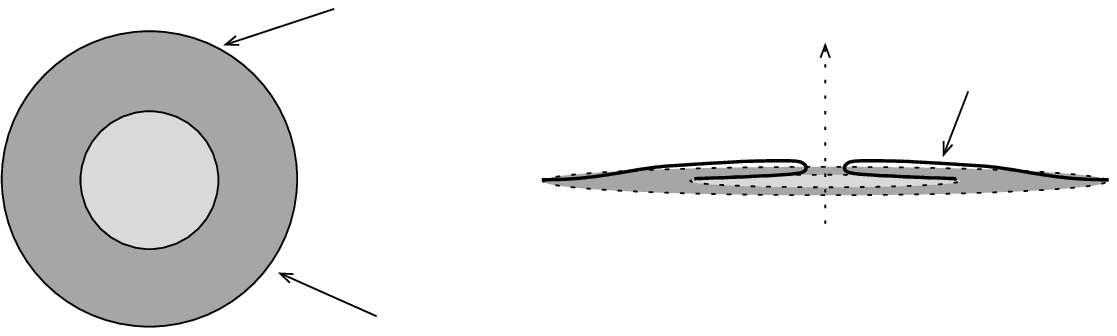\caption{{\small The construction described in Example \ref{current vs varifold}.}}\label{fig bubbling}
  \end{figure}
  Figure \ref{fig bubbling}. Set $\Gamma=\Gamma_1\cup\Gamma_2$, so that $N=S_1\setminus{\rm int}(S_2)$ is definitely a minimal surface spanned by $\Gamma$. Also, choose orientations on $S_1$, $S_2$ and $\Gamma$ in such a way that the spanning condition holds for the associated currents, that is $\partial \llbracket N \rrbracket = \partial (\llbracket S_1 \rrbracket - \llbracket S_2 \rrbracket) = \llbracket \Gamma \rrbracket$. We construct a sequence of surfaces $M_j$ by slightly bending $S_1$ and $S_2$ in the radial direction, and then connecting the two pieces with a catenoidal neck, see Figure \ref{fig bubbling}. Evidently, this can be arranged so that
\[
\pa \llbracket M_j \rrbracket = \llbracket \Gamma \rrbracket\,,\qquad \lim_{j\to\infty}\delta_{\infty}(M_j)=0\,,
\]
and the $M_j$'s converge to two copies of $S_2$ plus one copy of $N$, in the sense that
\begin{equation} \label{first bubbling and bad varifold}
\var(M_j) \to \var(N) +  2\var(S_2) = \var(N,1) + \var(S_2,2)\,.
\end{equation}
In particular:
\begin{equation}
  \label{first bubbling}
  \lim_{j\to\infty}\int_{M_j}\vphi\,d\H^n=\int_N\,\vphi\,d\H^n+2\,\int_{S_2}\,\vphi\,d\H^n\,,\qquad\forall\vphi\in C^0_c(\R^{n+1})\,.
\end{equation}
On the other hand, the currents $\llbracket M_j \rrbracket$ satisfy
\begin{equation} \label{limit in the sense of currents}
\llbracket M_j \rrbracket \to \llbracket N \rrbracket \qquad \mbox{in the sense of currents}\,,
\end{equation}
that is
\begin{equation} \label{limit in the sense of currents meaning}
\lim_{j \to \infty} \langle \llbracket M_j \rrbracket, \omega \rangle = \langle \llbracket N \rrbracket , \omega \rangle \qquad \forall\,\omega \in \D^{n}(\R^{n+1})\,,
\end{equation}
because the two copies of $S_2$ appearing in the limit come with opposite orientations, and hence the corresponding currents cancel out. For this simple boundary curve $\Gamma$, we thus have a negative answer to \eqref{question}: indeed, as shown by \eqref{first bubbling}, the limit of the $\{M_j\}_j$ cannot be described only in terms of minimal surfaces spanned by $\Gamma$ (which indeed is not spanning $S_2$).
In this example the bubbling phenomenon occurs, as part of the limit surface has multiplicity $2$. Observe also that $\Gamma$ is not accessible. Indeed, \eqref{accessible at x} cannot hold at any $x\in\Gamma_2$. Finally, the example can be easily generalized to the situation when $S_1$ and $S_2$ are two smooth, bounded, simply connected orientable minimal surfaces $S_1$ and $S_2$, spanned by curves $\Gamma_1$ and $\Gamma_2$, with $S_2\subset S_1$.}
\end{example}

\begin{example}[Bubbling under accessibility from infinity with very weak deficit]
  \label{example delta meno infinito}{\rm  As in Example \ref{example two rings part 1}, let $\Gamma$ consist of two parallel disks $\Gamma_1$ and $\Gamma_2$ with centers on a same axis, so that $\Gamma$ is accessible from infinity. We can give a negative answer to question \eqref{question} if a too weak notion of almost-minimality deficit is used, arguing along the following lines. Consider a catenoid $K$ spanned by $\Gamma$, and construct a sequence $M_j$ by slightly deforming $K$ outwards while keeping the boundary data at $\Gamma_2$, sharply turning around along $\Gamma_1$, going all the way towards the center of $\Gamma_1$, turning again downwards with a small catenoidal neck, and then almost filling $\Gamma_1$ with a disk; see
  \begin{figure}
    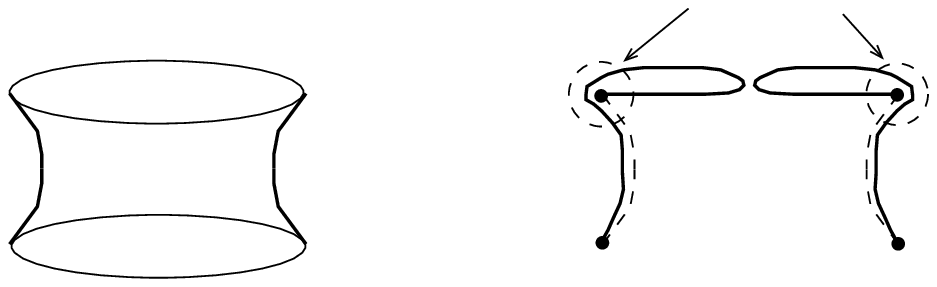\caption{\small{Bubbling is possible even when $\Gamma$ is accessible from infinity if a weak notion of deficit is used. Here $M_j$ is the surface of revolution obtained by rotating the one-dimensional profile on the right, $B_{\e_j}(\Gamma_1)$ denotes an $\e_j$-neighborhood of the circle $\Gamma_1$, and $M_j^*$ is the part of $M_j$ lying outside $B_{\e_j}(\Gamma_1)$. We take $\e_j$ such that $M_j$ intersects $\pa B_{\e_j}(\Gamma_1)$ in three circles, and so that the $H_{M_j}$ is uniformly small on $M_j\setminus M_j^*$. The limit surface counts one copy of $K$, and two copies of the disk filling $\Gamma_1$.}}\label{fig catenoids2}
  \end{figure}
  Figure \ref{fig catenoids2}. Denoting by $M_j^*$ the part of $M_j$ lying at distance at most $\e_j$ from $\Gamma_1$, by suitably selecting $\e_j\to 0$ as $j\to\infty$, we entail
  \[
  \pa M_j=\Gamma\,,\qquad \lim_{j\to\infty}\|H_{M_j}\|_{C^0(M_j\setminus M_j^*)}=0\,,\qquad \sup_{j\in\N}\|H_{M_j}\|_{L^1(M_j)}\le C\,.
  \]
  We claim that
  \[
  \lim_{j\to\infty}\de_{-\infty}(M_j)=0\,,\qquad \lim_{j\to\infty}\var(M_j)=2\,\var(D_1)+\var(K_3)\,,
  \]
  whereas
  \[
  \lim_{j\to\infty} \llbracket M_j \rrbracket = \llbracket K_3 \rrbracket\,.
  \]
  Thus the limits in the sense of varifolds and currents do not agree (we observe bubbling), while an almost-minimality deficit goes to zero (although this is indeed the weakest possible deficit in our scale). To show that $\de_{-\infty}(M_j)\to 0$, we fix a vector field $X$ compactly supported away from $\Gamma$ and with $|\nabla X|\le 1$. We fist notice that
  \[
  \Big|\int_{M_j^*}\Div^{M_j}X\Big|\le \H^2(M_j^*)\to 0\,.
  \]
  If $\Gamma_j^*$ is the component of the boundary of $M_j\setminus M_j^*$ that is not $\Gamma_2$, then by our choice of $\e_j$ we find
\begin{eqnarray*}
\Big|\int_{M_j\setminus M_j^*}\Div^{M_j}X\Big|&\le&\int_{M_j\setminus M_j^*}|X|\,|\HH_{M_j}|\,+\int_{\Gamma_j^*}\,|X|\,|\nu_{\Gamma_j^*}^{M_j}|
\\
&\le&  \diam(M_j)\,\H^2(M_j)\,\|H_{M_j}\|_{C^0(M_j\setminus M_j^*)}+\e_j\,\H^1(\Gamma_j^*)\,,
\end{eqnarray*}
where we have used $|\nabla X|\le 1$ and $X=0$ on $\Gamma$ to deduce: (i) that $|X|\le\e_j$ on $\Gamma_j^*$; and, (ii) that $|X|\le \diam(M_j)$ on $M_j$. Since $\H^1(\Gamma_j^*)\to 3\,\H^1(\Gamma_1)$ by construction, we have proved our claim.}
\end{example}

\subsection{Finiteness and regularity of the Plateau problem}\label{section finite plateau} The second main assumption we shall consider is that $\Gamma$ spans finitely many minimal surfaces. This is an idea that has to be formulated with great care, because of the singularities that minimal surfaces can exhibit.

Let $\Gamma$ be an $(n-1)$-dimensional compact smooth surface without boundary. As discussed in \S\, \ref{section convergence}, any varifold $V=\var(N,\theta)$ corresponding to a compact $\H^n$-rectifiable set $N$ in $\R^{n+1}$ and to a function $\theta\in L^1(\H^n\llcorner N;\N)$ such that
\begin{equation}
  \label{under}
  \int_N\,\theta\,\Div^N X\,d\H^n=0\qquad\forall X\in C^1_c(\R^{n+1}\setminus\Gamma;\R^{n+1})
\end{equation}
can arise as a possible limit of almost minimal surfaces. Possible limits $V$ have two other important properties: (i) As a consequence of \eqref{under}, the support of $V$ is bounded: indeed, an application of the monotonicity identity implies that $\spt\,V$ is contained in the convex hull of $\Gamma$, see \cite[Theorem 19.2]{SimonLN}; (ii) Given our assumptions on $M_j$, $V$ has bounded first variation, in the sense that
\[
\sup\Big\{\int_{N}\theta\,\Div^N\,X\,d\H^n:X\in C^1_c(\R^{n+1};\R^{n+1})\,,|X|\le 1\Big\}<\infty\,.
\]
In particular, by differentiation  of Radon measures, \eqref{under} is always extended to
\begin{equation}
  \label{under ext}
  \int_N\,\theta\,\Div^N X\,d\H^n=\int_{\R^{n+1}}X\cdot\nu\,d\mu_*\qquad\forall X\in C^1_c(\R^{n+1};\R^{n+1})\,,
\end{equation}
where $\mu_*$ is singular with respect to $\H^n\llcorner N$, and where $\nu$ is a Borel unit vector field.

Fully understanding the regularity of $\spt V$ when \eqref{under} holds is a major open problem in Geometric Measure Theory. What is known on this specific problem is the following. Define (for any compact set $N$) the sets of regular and singular points of $N$ as
 \begin{eqnarray*}
  {\rm Reg}(N)&:=&\Big\{x\in N \, : \,\mbox{$\exists\rho>0$ s.t. $N \cap B_{\rho}(x)$ is a smooth surface}
  \\
  &&\hspace{3cm}\mbox{with or without boundary in $B_{\rho}(x)$}\Big\}\,,
  \\
  \Sigma(N)&:=&N\setminus {\rm Reg}(N)\,.
\end{eqnarray*}
We further divide ${\rm Reg}(N)$ into ${\rm Reg}^{\circ}(N)$, the set of regular points of interior type (i.e., $N\cap B_{\rho}(x)$ is diffeomorphic to an $n$-dimensional disk), and into ${\rm Reg}^{b}(N)$, the regular points of boundary type. Now, let $V = \var(N,\theta)$ be such that \eqref{under} holds, and consider any open set $A$ such that $\theta$ is constant on $A\cap N$. Then Allard's regularity theorem \cite{Allard} shows that
\[
\H^n\Big((A\cap N)\Delta{\rm Reg}^\circ(N)\Big)=0\,.
\]
There is also a boundary regularity theorem \cite{Allardboundary}, showing the existence of $\e(n)>0$ such that if $\theta=1$ on $A\cap N$ and $\H^n(N\cap B_\rho(x))\le (1+\e(n))\,\om_n\rho^n/2$ for some $x\in A\cap N\cap\Gamma$, then $N\cap B_{\e(n)\rho}(x)$ is diffeomorphic to a half-disk.

The application of Allard's boundary regularity theorem can be quite deceptive. With reference to the notation of Example \ref{example two rings part 1}, it suffices to take $N=D_1\cup K_3$ with $\theta\equiv 1$ to construct an example of $V$ solving \eqref{under}, with $N\setminus\Gamma={\rm Reg}^\circ(N)$, and with $\Gamma_1=\Sigma(N)$. Notice also that a similar example holds even in the ``smoother'' case when the measure $\mu_*$ considered in the extension \eqref{under ext} of \eqref{under} actually agrees with $\H^{n-1}\llcorner\Gamma$, and when $\nu$ is $\H^{n-1}$-a.e. orthogonal to $\Gamma$; that is to say, when \eqref{under ext} takes the more geometric form
\begin{equation}
  \label{minimal varifold spanned by Gamma}
  \left\{
  \begin{split}
  &\nu(x)\in\SS^n\cap(T_x\Gamma)^\perp\qquad\mbox{for $\H^{n-1}$-a.e. $x\in\Gamma$}\,,
  \\
  &\int_N\,\theta\,\Div^N X\,d\H^n=\int_{\Gamma}\nu\cdot X\,d\H^{n-1}\qquad\forall X\in C^1_c(\R^{n+1};\R^{n+1})\,.
  \end{split}
  \right .
\end{equation}
Indeed, if the distance between the circles $\Gamma_1$ and $\Gamma_2$ in Example \ref{example two rings part 1} is such that $K_3$ meets with $D_1$ along $\Gamma_1$ at a $120$-degrees angle, then adding up the unit conormals of $D_1$ and $K_3$ on $\Gamma_1$ we obtain a unit vector
\[
\nu=\nu_{\Gamma_1}^{D_1}+\nu_{\Gamma_1}^{K_3}
\]
such that  \eqref{minimal varifold spanned by Gamma} holds, but still the boundary regularity theorem cannot be applied at any point of $\Gamma_1$, as $N\setminus\Gamma={\rm Reg}^\circ(N)$ and $\Gamma_1=\Sigma(N)$.

Summarizing, the analysis of almost-minimal surfaces spanned by $\Gamma$ unavoidably leads to consider minimal varifolds in $\R^{n+1}\setminus\Gamma$, but, in turn, these objects are only partially understood. Our compactness theorem will thus be conditional to assuming a rather precise structure for minimal varifolds in $\R^{n+1}\setminus\Gamma$. Namely, we shall require the possibility of decomposing them as linear combinations, with integer coefficients, of finitely many, unit density, connected pieces $N_i$ with unit conormals $\nu^{\rm co}_i$ along finite unions $\Gamma^{(i)}$ of connected components of $\Gamma$ (in particular, each piece $N_i$ may just be spanned by part of $\Gamma$); when removing its singular set and $\Gamma$, each piece $N_i$ is disconnected into at most finitely many smooth connected components. As explained in Proposition \ref{remark graphs} below, these assumptions hold in the fundamental case when $\Gamma$ is a graph over a convex surface.

\begin{definition}[Finiteness and regularity of minimal varifolds spanned by $\Gamma$]\label{definition finitely many}
  {\rm Let $\Gamma$ be a compact $(n-1)$-dimensional smooth surface without boundary in $\R^{n+1}$, and let $\{\Gamma_m\}_{m=1}^{M}$ denote the connected components of $\Gamma$. We say that $\Gamma$ {\bf spans finitely many minimal surfaces without boundary singularities} if there exists a {\bf finite family} $\{N_i\}_i$ of compact $\H^n$-rectifiable sets with the following properties:
  \begin{enumerate}
    \item[(i)] for each $i$, $N_i\setminus\Gamma$ is connected, and there exists a finite union $\Gamma^{(i)} = \bigcup_{m \in I^{(i)}} \Gamma_m$ of connected components of $\Gamma$ with
        \[
        N_i\cap\Gamma=\Gamma^{(i)}={\rm Reg}^b(N_i)\,,\qquad\Sigma(N_i)\cap\Gamma=\emptyset\,,
        \]
        and such that for some $\nu^{\rm co}_i:\Gamma^{(i)}\to\SS^n$ with $\nu^{\rm co}_i(x)\in (T_x\Gamma^{(i)})^\perp\cap T_xN_i$ it holds
        \[
        \int_{N_i}\,\Div^{N_i} X\,d\H^n=\int_{\Gamma^{(i)}}\nu_i^{\rm co}\cdot X\,d\H^{n-1}\qquad\forall X\in C^1_c(\R^{n+1};\R^{n+1})\,;
        \]
        moreover, ${\rm Reg}^{\circ}(N_i)$ has finitely many connected components $\{N_{i,\ell}\}_{\ell=1}^{L(i)}$ such that, for each $\ell$, $\cl(N_{i,\ell})\setminus\Sigma(N_i)$ is an orientable, smooth $n$-dimensional surface with boundary, whose boundary points are contained in $\Gamma^{(i)}$;

    \item[(ii)] if $V=\var(N,\theta)$ has bounded support, bounded first variation, and satisfies
    \begin{equation}
      \label{zero zero}
          \int_N\,\theta\,\Div^NX=0\qquad\forall X\in C^1_c(\R^{n+1}\setminus\Gamma;\R^{n+1})\,,
    \end{equation}
    then there exist $q_i\in\N$ such that
    \[
    V=\sum_i\,q_i\,\var(N_i)\,.
    \]
  \end{enumerate}
  }
\end{definition}

\begin{remark}\label{remark allard}
  {\rm By Allard regularity theorem and by property (i), for each $i$, $\var(N_i)$ is a minimal varifold in $\R^{n+1}\setminus\Gamma$ with constant unit density, and thus we have
  \[
  \H^n(\Sigma(N_i))=0\,.
  \]
  Notice that we are excluding the possibility that $\Sigma(N_i)$ intersects $\Gamma$: in other words, singularities are allowed, but not up to the boundary. In principle, this is the situation depicted in Figure \ref{fig catenoids}. It is not hard, however, to observe soap films with curves of singular points extending up to the wire frame, so we do not expect this assumption to be generic.}
\end{remark}

The problem of checking Definition \ref{definition finitely many} on some classes of examples, or even in simple explicit situations like the one described in Example \ref{example two rings part 1}, seems delicate. In the next proposition we address the case of graphs over convex boundaries.

\begin{proposition}\label{remark graphs}
  If $\Omega\subset\R^n \times \{0\}$ is a bounded connected open set with smooth and convex boundary, and if $\Gamma\subset\R^{n+1}$ is the graph of a smooth function $u$ over $\pa\Om$, then $\Gamma$ spans finitely many minimal surfaces in the sense of Definition \ref{definition finitely many}.
\end{proposition}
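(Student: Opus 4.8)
The plan is to produce the family $\{N_i\}$ of Definition \ref{definition finitely many} explicitly, and it will consist of the single set $N_1$ given by the closed graph of the minimal surface equation over $\Omega$ with boundary datum $u$. Since $\Omega$ is a bounded convex body, $\partial\Omega$ is connected, hence so is $\Gamma$, and there is one connected component, $\Gamma^{(1)}=\Gamma$. Convexity makes $\partial\Omega$ mean-convex, so by the classical theory of the nonparametric minimal surface equation (solvability on mean-convex domains, boundary gradient estimates, Schauder bootstrapping) there is a unique $v\in C^\infty(\overline\Omega)$ with $\Div\!\big(\nabla v/\sqrt{1+|\nabla v|^2}\big)=0$ in $\Omega$ and $v=u$ on $\partial\Omega$. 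I set $N_1:=\{(y,v(y)):y\in\overline\Omega\}\subset\R^{n+1}$: this is a compact, orientable, smooth $n$-dimensional surface with boundary, with $N_1\cap\Gamma=\Gamma={\rm Reg}^b(N_1)$, $\Sigma(N_1)=\emptyset$, $N_1\setminus\Gamma$ connected (as $\Omega$ is), and ${\rm Reg}^\circ(N_1)=N_1\setminus\Gamma$ having a single connected component $N_{1,1}$ with $\cl(N_{1,1})=N_1$. With $\nu^{\rm co}_1$ the outer unit conormal of $N_1$ along $\Gamma$, the tangential divergence theorem \eqref{tangential divergence theorem} and the minimality of $N_1$ give condition (i) at once.

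The substance is condition (ii). Let $V=\var(N,\theta)$ have bounded support, bounded first variation, and satisfy $\int_N\theta\,\Div^N X\,d\H^n=0$ for all $X\in C^1_c(\R^{n+1}\setminus\Gamma;\R^{n+1})$. The heart of the matter is the claim $\spt V\subseteq N_1$. Granting it, $\mu_V$ is concentrated on $N_1\setminus\Gamma=\{(y,v(y)):y\in\Omega\}$, a connected, smooth, minimal hypersurface, properly embedded and without boundary in the open cylinder $\Omega\times\R$, and $V$ is stationary there; the constancy theorem for stationary integral varifolds (see \cite{SimonLN}) then forces $\mu_V=q_1\,\H^n\llcorner(N_1\setminus\Gamma)$ for an integer $q_1\ge 0$, and since $\H^n(\Gamma)=0$ this reads $V=q_1\,\var(N_1)$, which is (ii).

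To prove $\spt V\subseteq N_1$ I argue by barriers, the only non-elementary tool being the strong maximum principle for codimension-one stationary integral varifolds (Ilmanen, and Solomon--White at the boundary). First, by the monotonicity argument recalled in \S\ref{section finite plateau} one has $\spt V\subseteq\Gamma^{\rm co}$, so $\spt V$ lies in the closed cylinder $\overline\Omega\times\R$ over $\overline\Omega$, because $\pi(\Gamma^{\rm co})\subseteq\overline\Omega$ for the orthogonal projection $\pi$ onto $\R^n\times\{0\}$. Next comes a boundary lemma: $\spt V\cap(\partial\Omega\times\R)\subseteq\Gamma$. Indeed, if $p=(y_0,s)\in\spt V$ with $y_0\in\partial\Omega$ and $s\ne u(y_0)$, choose a small ball $B\ni p$ disjoint from $\Gamma$, so that $V$ is stationary in $B$; a supporting hyperplane of $\overline\Omega$ at $y_0$ places all of $\spt V$ on one side of the vertical minimal hyperplane $P$ through $p$, and the strong maximum principle forces $\spt V$ to coincide near $p$ with a piece of $P$ — impossible, since $P\cap(\overline\Omega\times\R)$ is too thin to carry a stationary $n$-varifold (it is a line when $y_0$ is an extreme point of $\overline\Omega$).

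With the boundary lemma in hand I slide the minimal graphs $N_1^t:=\{(y,v(y)+t):y\in\overline\Omega\}$, $t\ge 0$, which are minimal with boundary the vertical translate of $\Gamma$ by $t$. For $t$ large, $N_1^t$ lies strictly above the bounded set $\spt V$; set $\tau^*:=\inf\{t\ge 0:\spt V\subseteq\{(y,s):s\le v(y)+t\}\}$. By compactness the infimum is attained, and if $\tau^*>0$ there is a contact point $p^*=(y^*,v(y^*)+\tau^*)\in\spt V\cap N_1^{\tau^*}$. The boundary lemma rules out $y^*\in\partial\Omega$ (which would force $\tau^*=0$), so $y^*\in\Omega$, $p^*\notin\Gamma$, and $V$ is stationary near $p^*$, lies on one side of the connected minimal hypersurface $N_1^{\tau^*}\cap(\Omega\times\R)$, and touches it at $p^*$; the strong maximum principle then gives $N_1^{\tau^*}\cap(\Omega\times\R)\subseteq\spt V$, hence, taking closures, $\spt V\supseteq N_1^{\tau^*}$ — in particular $\spt V$ meets $\partial\Omega\times\R$ at height $u+\tau^*\ne u$, contradicting the boundary lemma. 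Therefore $\tau^*\le 0$, i.e.\ $s\le v(y)$ on $\spt V$; the symmetric argument (sliding from below) gives $s\ge v(y)$ on $\spt V$, whence $\spt V\subseteq N_1$. The two non-elementary ingredients are the up-to-the-boundary solvability and regularity of the minimal surface equation on convex domains, and the strong maximum principle, which is exactly what makes barriers available in the possibly singular, higher-multiplicity varifold setting; the step I expect to require the most care is the boundary lemma when $\partial\Omega$ has flat faces, since then supporting hyperplanes meet $\overline\Omega$ along a whole facet: one must show that the portion of $\spt V$ pushed onto a vertical facet $P\times\R$ is a flat stationary piece whose only admissible free boundary lies on $\Gamma$, which — $\Gamma$ being a graph over $\partial\Omega$ — cannot bound such a piece, or else slice along the facet and induct on the dimension.
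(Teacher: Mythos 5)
Your proof is correct and follows the same overall strategy as the paper: confine $\spt V$ to the closed cylinder over $\overline\Omega$, slide vertical translates of the minimal graph until they touch $\spt V$, invoke the strong maximum principle for stationary varifolds, and finish with the constancy theorem. Two differences are worth noting. First, you confine $\spt V$ to $\cl(\Omega\times\R)$ via the convex hull property $\spt V\subset\Gamma^{\rm co}$ (monotonicity), whereas the paper dilates the cylinder $t(\Omega\times\R)$ and applies Sch\"atzle's maximum principle to $\partial K(t_*)$, which has nonnegative mean curvature by convexity of $\partial\Omega$; your route is shorter and is in fact the one the paper alludes to in \S\,3.4. Second, you isolate a standalone boundary lemma ($\spt V\cap(\partial\Omega\times\R)\subseteq\Gamma$) proved with vertical supporting hyperplanes, while the paper handles boundary contact inside the sliding argument: if the touching point of $s^*e_{n+1}+G(u)$ with $\spt V$ were interior, the maximum principle would force $s^*e_{n+1}+G(u)\subset\spt V$, which then touches the cylinder $\partial\Omega\times\R$ away from $\Gamma$; a second application of the maximum principle with the cylinder as barrier yields $\partial\Omega\times\R\subset\spt V$, contradicting boundedness. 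This reorganization matters for the difficulty you flag: with a merely convex (not strictly convex) smooth $\partial\Omega$, your supporting hyperplane $P$ may meet $\overline\Omega\times\R$ in a full vertical facet $F\times\R$, so local coincidence with $P$ near $p$ is not immediately absurd. The clean resolution is not slicing or induction but the same unboundedness argument the paper uses for the cylinder: once the maximum principle gives local coincidence of $\spt V$ with $P$ at $p\notin\Gamma$, the continuation argument propagates this along the connected component of $p$ in $({\rm relint}\,F\times\R)\setminus\Gamma$, which is a vertical half-infinite strip (the graph of $u$ over $F$ separates the strip into two unbounded pieces), contradicting the boundedness of $\spt V$. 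With that patch your boundary lemma, and hence the whole argument, is complete.
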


\begin{proof} Let us assume without loss of generality that $0\in\Omega$.
  Let $V=\var(N,\theta)$ be an integral varifold with bounded support satisfying \eqref{zero zero}. We first prove that $\spt V$ is contained in $\cl(\Om\times\R)$, where $\cl(A)$ denotes the closure of $A\subset\R^{n+1}$. Indeed let $H_\Om$ denote the mean curvature of $\pa\Om$ with respect to the outer unit normal to $\Om$. Consider the open cylinders $K(t)=t\,(\Om\times\R)$ for $t>1$. Since the support of $V$ is bounded, for $t$ large enough we have that $\spt V \Subset K(t)$. If $t_*=\inf\{ t: \spt V\Subset K(t)\}$, then $t_*<\infty$ and thus there exists $x = (x',x_{n+1})\in\spt\,V\cap\pa K(t_*)$ such that, in the ordering of $\nu_{\pa K(t_*)}(x) = \nu_{\Om}(x_*)$, $x_* := (x'/t_*,0)$, the smooth surface $\pa K(t_*)$ touches from above $\spt V$  locally at $x$. Let us assume that $x\in\R^{n+1}\setminus\Gamma$. Since $\pa K(t_*)$ is smooth, $\HH_{\pa K(t_*)}(x)\cdot\nu_{\pa K(t_*)}(x)=H_\Om(x_*)/t_*\ge0$, and $V$ is minimal in a neighborhood of $x$, by the strong maximum principle of Sch\"atzle \cite[Theorem 6.2]{schatzle} this is possible only if, locally at $x$, $\pa K(t_*)$ is contained in $\spt V$. Since $\spt V$ is anyway contained in $\cl(K(t_*))$, by a continuity argument, and by the connectedness of $\pa K(t_*)$, we obtain $\pa K(t_*)\subset\spt V$. This would be a contradiction, since $\spt V$ is bounded. Thus it must be that $x\in\Gamma$, i.e. $t_*=1$, and $\spt V\subset\cl(\Om\times\R)$.

  \medskip

  The classical area integrand theory (see, e.g. \cite[Chapter 1]{giustiDIRECTMETHODS}) implies the existence of a smooth extension of $u$ to the whole $\Om$, still denoted $u$, such that $G(u)=\{(z,u(z)):z\in\cl(\Om)\}$ satisfies
  \[
  \int_{G(u)}\Div^{G(u)}X\,d\H^n=\int_{\Gamma}X\cdot\nu_u\,d\H^{n-1}\qquad\forall X\in C^1_c(\R^{n+1};\R^{n+1})
  \]
  with $\nu_u(x)\in (T_x\Gamma)^\perp$ for every $x\in\Gamma$. Setting $N_1=G(u)$, properties (i) and (ii) in Definition \ref{definition finitely many} are clearly satisfied by $N_1$.

  \medskip

  We finally prove that $V=q\,\var(G(u))$ for some $q\in\N$. Since $\spt V$ is bounded and contained in the closure of $\Om\times\R$, we find that
  \[
  s^*=\inf\big\{s:x_{n+1}<s+u(z)\,\,\,\forall(z,x_{n+1})\in\spt V\big\}
  \]
  is finite. In particular, $s^*\,e_{n+1}+G(u)$ touches $\spt V$ from above in the ordering of $e_{n+1}$. If the touching point $x$ does not belong to $\Gamma$, then, again by Sch\"atzle's strong maximum principle we find that $s^*\,e_{n+1}+G(u)\subset\spt\,V$ with $s^*\ne 0$. But then $\spt V$ would have a contact point with $\pa\Om\times\R$ outside of $\Gamma$, where $V$ is minimal, and thus the strong maximum principle would imply $\pa\Om\times\R\subset\spt V$, once again against the boundedness of $\spt\,V$. The touching point $x$ of $s^*\,e_{n+1}+G(u)$ and $\spt V$ must thus lie on $\Gamma$, so that $s^*=0$, and $x_{n+1}\le u(z)$ whenever $(z,x_{n+1})\in\spt\,V$. An entirely similar argument shows that
  \[
  s_*=\sup\big\{s:x_{n+1}>s+u(z)\,\,\,\forall(z,x_{n+1})\in\spt V\big\}=0\,.
  \]
  so that we also have $x_{n+1}\ge u(z)$ whenever $(z,x_{n+1})\in\spt\,V$. We have thus proved that $G(u)=\spt\,V$. The constancy theorem for integral varifolds, \cite[Theorem 41.1]{SimonLN}, implies that $V=q\,\var(G(u))$ for a constant $q\in\N$.
\end{proof}

\section{The compactness theorem}\label{section proof of compactness thm}

We are finally ready to state and prove our main compactness theorem.

\begin{theorem}[Compactness theorem for almost-minimal surfaces]
  \label{thm main}
   Let $\Gamma$ be a smooth $(n-1)$-dimensional compact orientable manifold without boundary in $\R^{n+1}$, and let $\llbracket \Gamma \rrbracket$ be the $(n-1)$-current corresponding to the choice of an orientation $\tau_\Gamma$ on $\Gamma$. Assume that $\Gamma$ is accessible from infinity (see Definition \ref{definition totally}) and that $\Gamma$ spans finitely many minimal surfaces without boundary singularities (see Definition \ref{definition finitely many}).

   Let $\{M_j\}_j$ be a sequence of smooth $n$-dimensional surfaces, oriented by smooth unit normal vector fields $\nu_{M_j}$, and with smooth boundaries $\Gamma_j$ oriented in such a way that, if $\llbracket M_j \rrbracket=\llbracket M_j,\star\nu_{M_j},1 \rrbracket$, then
   \begin{eqnarray}\label{sequence of main theorem}
   \left\{
   \begin{split}
   &\partial \llbracket M_j \rrbracket = \llbracket \Gamma_j \rrbracket\,,
     \\
     &\sup_j\,\max\Big\{\H^n(M_j),\sup_{x\in M_j}|x|\Big\}<\infty\,,
     \\
     &\lim_{j\to\infty}\int_{M_j}|H_{M_j}|=0\,.
   \end{split}
   \right .
   \end{eqnarray}
   Assume that $\Gamma_j$ converges to $\Gamma$, in the sense that there exist Lipschitz maps $f_j \colon \Gamma \to \R^{n+1}$ with
   \begin{eqnarray}\label{boundary convergence}
   \left\{\begin{split}
   &f_j(\Gamma)=\Gamma_j\,,\qquad \sup_j\Lip(f_j)<\infty\,,
   \\
   &\lim_{j \to \infty}\|f_j - {\rm id}_{\Gamma}\|_{C^1(\Gamma)} =0\,.
   \end{split}\right .
   \end{eqnarray}
   Then, there exist an $\H^n$-rectifiable set $N$, and Borel vector fields $\nu_N:N\to\SS^n$ and $\nu:\Gamma\to\SS^n$ with
   \begin{eqnarray}
   &&\mbox{$\nu_{N}(x) \in (T_xN)^{\perp}$ for $\H^n$-a.e. $x\in N$}\,,
   \\
   &&\mbox{$\nu(x) \in (T_x\Gamma)^{\perp}$ for $\H^{n-1}$-a.e. $x\in \Gamma$}\,,
   \\
   \label{bdry in the sense of currents}
   &&\partial \llbracket N, \star \nu_N,1\rrbracket = \llbracket \Gamma \rrbracket\,,
   \\ \label{bdry in the sense of varifolds}
   &&\int_N\Div^N\,X=\int_\Gamma X\cdot\nu\,d\H^{n-1}\qquad\forall X\in C^1_c(\R^{n+1};\R^{n+1})\,,
   \end{eqnarray}
   and such that, up to extracting subsequences, $M_j\to N$ both in the sense of currents and in the sense of varifolds, i.e.
   \begin{equation}
     \label{tesi thm}
     \llbracket M_j \rrbracket\to \llbracket N, \star \nu_N,1 \rrbracket\,,\qquad \var(M_j)\to\var(N,1)\,.
   \end{equation}
   \end{theorem}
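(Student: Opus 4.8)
The plan is to extract the two limits by standard compactness, use Definition~\ref{definition finitely many} to give the varifold limit a rigid structure, use accessibility from infinity to exclude bubbling, and finally use the hypothesis $\int_{M_j}|H_{M_j}|\to 0$ (not merely boundedness) to upgrade varifold convergence to current convergence. The hard part will be the exclusion of bubbling; everything else is fairly routine once that is in place.

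\emph{Compactness.} By \eqref{sequence of main theorem} the currents $\llbracket M_j\rrbracket$ have equibounded mass and support, while $\partial\llbracket M_j\rrbracket=\llbracket\Gamma_j\rrbracket=(f_j)_\#\llbracket\Gamma\rrbracket$ have equibounded mass and converge to $\llbracket\Gamma\rrbracket$ by \eqref{boundary convergence}; the Federer--Fleming compactness theorem then yields, along a subsequence, an integral current $T$ with $\llbracket M_j\rrbracket\to T$ and $\partial T=\llbracket\Gamma\rrbracket$. Similarly the integral varifolds $\var(M_j)$ have equibounded mass, support, and total first variation, the latter because $\delta\var(M_j)(X)=\int_{M_j}X\cdot\HH_{M_j}+\int_{\Gamma_j}X\cdot\nu_{\Gamma_j}^{M_j}$ is controlled by $\|H_{M_j}\|_{L^1(M_j)}+\H^{n-1}(\Gamma_j)$; so by Allard's integral compactness theorem, and since $\spt X\cap\Gamma=\emptyset$ forces $\spt X\cap\Gamma_j=\emptyset$ for large $j$ while $\int_{M_j}|H_{M_j}|\to0$, a further subsequence gives an integral varifold $V=\var(N,\theta)$ that is stationary in $\R^{n+1}\setminus\Gamma$. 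I would also record that $\|T\|\le\mu_V$ (both weights are limits of $\H^n\llcorner M_j$) and $\H^n(M_j)\to\mu_V(\R^{n+1})$ (weak-$*$ convergence with equibounded supports).

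\emph{Structure and no bubbling.} Since $V$ has bounded support, bounded first variation and satisfies \eqref{zero zero}, Definition~\ref{definition finitely many}(ii) gives $q_i\in\N$ with $V=\sum_i q_i\,\var(N_i)$; set $I=\{i:q_i\ge1\}$. The core of the proof — and the step I expect to be the main obstacle — is to show that accessibility from infinity forces $q_i=1$ for every $i\in I$ and that each component $\Gamma_m$ of $\Gamma$ lies in $\Gamma^{(i)}$ for \emph{exactly one} $i\in I$. The mechanism is a one-point maximum-principle argument: at an accessibility point $x_0\in\Gamma_m$ the remark after \eqref{accessible at x} shows that \emph{every} $N_i$ sits in the acute wedge $W$ with apex $x_0$, and Definition~\ref{definition finitely many}(i) makes each $N_i$ with $\Gamma_m\subseteq\Gamma^{(i)}$ a smooth minimal half-disk near $x_0$; the confinement of $\spt V$ to the acute wedge, together with the monotonicity formula and Allard's boundary regularity theorem, should then force $\Theta(V,x_0)=\tfrac12$ at $\H^{n-1}$-a.e.\ accessibility point, i.e.\ $V$ agrees near $x_0$ with a single multiplicity-one smooth half-disk. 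Two distinct $N_i,N_{i'}$ with $\Gamma_m\subseteq\Gamma^{(i)}\cap\Gamma^{(i')}$ would produce two distinct half-disks there (distinctness by unique continuation), so the index $i(m)$ is unique and $q_{i(m)}=1$; and since $\Gamma^{(i)}\ne\emptyset$ for $i\in I$ (there is no nonempty compact stationary varifold without boundary in $\R^{n+1}$) while $\Gamma=\spt(\partial T)\subseteq\spt T\subseteq\spt V=\bigcup_{i\in I}N_i$, the family $\{\Gamma^{(i)}:i\in I\}$ partitions $\Gamma$. Examples~\ref{current vs varifold} and \ref{example delta meno infinito} show that both accessibility and the strength of the deficit are genuinely used here.

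\emph{Conclusion.} Put $N:=\bigcup_{i\in I}N_i$. By the previous step and unique continuation (distinct $N_i$ meet in $\H^n$-null sets), $V=\sum_{i\in I}\var(N_i)=\var(N,1)$, which is the asserted $\var(M_j)\to\var(N,1)$ and gives $\theta\equiv1$ on $N$. It remains to identify the current limit. Since $\|T\|\le\mu_V=\H^n\llcorner N$ it suffices to prove that the density of $T$ equals $\pm1$ $\H^n$-a.e.\ on $N$, i.e.\ that no cancellation occurs. At $\H^n$-a.e.\ $x_0\in N$ one has $\Theta(V,x_0)=1$ and $V$ stationary nearby; since $\var(M_j)\to V$ and $\int_{M_j}|H_{M_j}|\to0$ (so the total first-variation mass of $\var(M_j)$ tends to $0$, which beats $\rho^{n-1}$ for any fixed $\rho$), for $j$ large $\var(M_j)$ meets the hypotheses of Allard's $\e$-regularity theorem in a fixed small ball about $x_0$; hence $M_j$ is there a single $C^{1,\a}$ graph over $T_{x_0}N$ converging to $N$, so $\llbracket M_j\rrbracket$ is a multiplicity-one graph current near $x_0$ and its limit has density $\pm1$ there. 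Therefore $\|T\|=\mu_V$ and, absorbing the sign of the density into the orientation, $T=\llbracket N,\star\nu_N,1\rrbracket$; then $\partial T=\llbracket\Gamma\rrbracket$ is \eqref{bdry in the sense of currents} (and this relation is precisely what forces the boundary contributions of $\llbracket N,\star\nu_N,1\rrbracket$ along the interior singular sets $\Sigma(N_i)$ to vanish, which is why, for instance, ``singular catenoids'' cannot occur as limits). Finally \eqref{bdry in the sense of varifolds} is read off from $\delta V(X)=\sum_{i\in I}\int_{\Gamma^{(i)}}\nu_i^{\rm co}\cdot X\,d\H^{n-1}$ by setting $\nu:=\nu_{i(m)}^{\rm co}$ on $\Gamma_m$, which lies in $(T_x\Gamma)^\perp$; orthogonality of $\nu_N$ to $T_xN$ is part of rectifiability.
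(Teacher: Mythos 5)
Your overall architecture (compactness for currents and varifolds separately, the structure hypothesis to decompose $V$, accessibility to kill bubbling, then identification of the current limit) matches the paper, but the two decisive steps are carried out by mechanisms that do not work, and both are replaced in the paper by a single ingredient you never invoke: White's parity theorem \cite[Theorem 1.2]{White_currents_varifolds}, which gives $V=V_T+2W$ for an integral varifold $W$, hence $q_{i,\ell}-|\alpha_{i,\ell}|\in 2\N$.

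The first gap is the no-bubbling step. You claim that confinement of $\spt V$ to the acute wedge at an accessibility point $x_0\in\Gamma_m$, ``together with the monotonicity formula and Allard's boundary regularity theorem,'' forces $\Theta(V,x_0)=\tfrac12$. This cannot work: Allard's boundary theorem takes the density bound $\H^n(N\cap B_\rho(x))\le(1+\e(n))\om_n\rho^n/2$ as a \emph{hypothesis} and the monotonicity formula only gives lower density bounds, while wedge confinement is perfectly compatible with high boundary density (a multiplicity-$k$ half-disk, or several distinct half-disks through $\Gamma_m$, can all sit inside the wedge). What the paper actually extracts from accessibility is a statement about \emph{conormals}, not densities: since $\int_{M_j}|H_{M_j}|\to 0$ and $f_j\to{\rm id}$ in $C^1$, the first variation $\de V$ is the weak-$*$ limit of the unit conormals $\nu_{\Gamma_j}^{M_j}\circ f_j$, so $\big|\sum_{i:m\in I^{(i)}}q_i\,\nu_i^{\rm co}(x)\big|\le 1$ for $\H^{n-1}$-a.e. $x\in\Gamma_m$; at an accessibility point all the $-\nu_i^{\rm co}$ point into the acute wedge, and an elementary lemma (\cite[Lemma 6.16]{DLR}) shows that a sum of $2p+1$ unit vectors in such a wedge has length $>1$ unless $p=0$. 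Note that this length constraint alone cannot rule out an \emph{even} total $\sum q_i$ (two unit vectors in a wide acute wedge can sum to something of length $\le 1$); the oddness of $\sum q_i$ is supplied precisely by White's theorem combined with $\pa T=\llbracket\Gamma\rrbracket$ and the constancy theorem on the pieces $N_{i,\ell}$. Your argument contains no substitute for this parity input.

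The second gap is the identification of the current limit. To show the density of $T$ is $\pm1$ on $N$ you apply Allard's interior $\e$-regularity to $M_j$ near a.e.\ point of $N$, asserting that $\int_{M_j}|H_{M_j}|\to0$ ``beats $\rho^{n-1}$.'' Allard's theorem requires smallness of the mean curvature in $L^p$ with $p>n$ (equivalently $\big(\int h^p\big)^{1/p}\rho^{1-n/p}$ small), not $L^1$ smallness of the total first variation; the paper states explicitly that the $L^p$ bound with $p>n$ is assumed in Section \ref{section decay rates} exactly because it ``is needed to enforce the graphicality of $M_j$ over $N_i$ via Allard's regularity theorem.'' Under the hypotheses of Theorem \ref{thm main} you therefore cannot conclude that $M_j$ is a single multiplicity-one graph near $x_0$. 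The paper instead gets $|\alpha_{i,\ell}|=1$ purely algebraically: $\|T\|\le\|V\|$ and the constancy theorem give $T=\sum\alpha_{i,\ell}\llbracket N_{i,\ell}\rrbracket$, and $q_{i,\ell}=1$ together with $q_{i,\ell}-|\alpha_{i,\ell}|=2\beta_{i,\ell}$, $\beta_{i,\ell}\in\N$, forces $\alpha_{i,\ell}=\pm1$. So both of your problematic steps are repaired by the same missing theorem; without it (or an equivalent parity statement) the proof does not close.
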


   \begin{remark} \label{rmk:classical_singularities}
  {\rm A point that we are not trying to formalize  here is that in situations like the one considered in Figure \ref{fig catenoids}, when $\Sigma(N)$, if present, is ``classical'', then one can actually prove that $\Sigma(N)=\emptyset$, thus concluding that smooth $M_j$'s cannot converge to minimal surfaces with singularities. To illustrate the idea, let $\Gamma_1$ and $\Gamma_2$ be the circles of Example \ref{example two rings part 1}, and fix orientations on $\Gamma_1$ and $\Gamma_2$ in order to define the associated currents $\llbracket \Gamma_1 \rrbracket$ and $\llbracket \Gamma_2 \rrbracket$. Suppose by contradiction that as a limit of a sequence $M_j$ of almost-minimal surfaces with $\partial \llbracket M_j \rrbracket = \llbracket \Gamma \rrbracket := \llbracket \Gamma_1 \rrbracket + \llbracket \Gamma_2 \rrbracket$ one obtains the singular minimal surface $N=K\cup K'\cup D$ obtained by gluing two catenoids $K$ and $K'$ to a disk $D$ along the boundary circle $\Sigma=\pa D$ with a $120$-degrees angle. Assign orientations to $K$, $K'$, and $D$ in such a way that
  \[
  \pa\llbracket K \rrbracket=\llbracket\Gamma_1\rrbracket + \sigma_1 \pa\llbracket D \rrbracket\,,
  \qquad
  \pa\llbracket K' \rrbracket= \llbracket\Gamma_2\rrbracket + \sigma_2 \pa\llbracket D \rrbracket\,,
  \qquad
  \sigma_{i} \in \{\pm 1\}\,.
  \]
  The limit current $T$ of the sequence $\llbracket M_j \rrbracket$ must then satisfy
  \[
  T=\a_1\,\llbracket K \rrbracket+\a_2\,\llbracket K' \rrbracket+\a_3\,\llbracket D \rrbracket
  \]
  for $\a_i\in\{\pm 1\}$, so that
  \[
  \pa T=\a_1\,\llbracket \Gamma_1 \rrbracket+\a_2\,\llbracket \Gamma_2 \rrbracket+(\a_1\s_1 + \a_2\s_2 + \a_3)\,\llbracket \pa D \rrbracket\,.
  \]
  Since $T$ is the limit of currents defined by the $M_j$'s, we also have
  \[
  \pa T= \llbracket \Gamma \rrbracket = \llbracket \Gamma_1 \rrbracket + \llbracket \Gamma_2 \rrbracket\,,
  \]
which implies $\a_1 = 1 = \a_2$ and
\[
\s_1 + \s_2 + \a_3 = 0\,,
\]
which is impossible, given $\s_1,\s_2,\a_3 \in \{-1,1\}$. A general argument along these lines can be repeated if assuming that a number of odd half-spaces meet along points in $\Sigma(N)$.}
  \end{remark}

   Before giving the proof of the theorem, we need to introduce some notation. Given an $n$-dimensional varifold $V$ on $\R^{n+1}$, that is, a Radon measure on $G^n=\R^{n+1}\times(\SS^n/\equiv)$ as described in section \ref{section almost minimal surfaces}, we denote by
   \[
   \de V(X)=\int_{G^n}\Div_TX(x)\,dV(x,T)
   \]
   the {\bf first variation} of $V$ along a vector field $X\in C^1_c(\R^{n+1};\R^{n+1})$. The {\bf weight of $V$} and the {\bf total first variation of $V$} are defined by
   \begin{eqnarray*}
     \|V\|(E)&:=&V(E\times(\SS^n/\equiv))
     \\
     \|\de V\|(A)&:=&\sup\Big\{\int_{G^n}\Div_TX(x)\,dV(x,T):X\in C^1_c(A;B_1(0))\Big\}\,,
   \end{eqnarray*}
   for every Borel set $E$ and open set $A$ in $\R^{n+1}$.
   Given an $n$-dimensional integer rectifiable current $T=\llbracket N, \star \nu_N, \alpha \rrbracket$, the {\bf mass of $T$} is the Radon measure
   \[
   \|T\|:=|\alpha|\,\H^n\llcorner N\,.
   \]
   We denote by
   \[
   V_T=\var(N,|\alpha|)
   \]
   the {\bf induced varifold of $T$}.

\begin{proof}[Proof of Theorem \ref{thm main}] {\it Step one}: We start by discussing the varifold limit of the $M_j$'s. By the area formula and by \eqref{boundary convergence} we have
\begin{equation} \label{uniform_bdry_mass}
\Ha^{n-1}(\Gamma_{j}) \leq C \Ha^{n-1}(\Gamma)\,.
\end{equation}
Setting $V_{j} := \var(M_j)$, by the tangential divergence theorem we have
\begin{equation} \label{first_variation}
\delta V_{j}(X) = \int_{M_j} \di^{M_j} X \, d\Ha^n = \int_{M_j} X \cdot {\bf H}_{M_j} \, d\Ha^n + \int_{\Gamma_j} X \cdot \nu_{\Gamma_j}^{M_j} \, d\Ha^{n-1}\,,
\end{equation}
for every $X \in C^{1}_{c}(\R^{n+1};\R^{n+1})$. In particular, \eqref{uniform_bdry_mass} and  $\delta_{1}(M_j) \to 0$ imply
\begin{equation} \label{uniform_first_var}
\limsup_{j\to\infty}\|\delta V_{j} \|(\R^{n+1})\leq\limsup_{j\to\infty}\left(\int_{M_j} \abs{{\bf H}_{M_j}} \, d\Ha^n + \Ha^{n-1}(\Gamma_j) \right) \leq C \Ha^{n-1}(\Gamma)\,,
\end{equation}
while at the same time $\|V_j\|(\R^{n+1})=\H^n(M_j)$. By \eqref{sequence of main theorem}, the supports of the $V_j$'s are contained in a fixed ball, and
\[
\sup_j\|V_j\|(\R^{n+1})\le C\,,
\]
thus by Allard's compactness theorem for integral varifolds (cf. \cite[Theorem 6.4]{Allard}, \cite[Theorem 42.7]{SimonLN}), there exists a not relabeled subsequence $V_{j}\to V$ as $j\to\infty$ for an integral varifold $V$. We notice that
\begin{equation}
  \label{minimal varifold for Gamma proof}
  \de V(X)=0\qquad\forall X \in C^{1}_{c}(\R^{n+1} \setminus \Gamma, \R^{n+1})\,.
\end{equation}
Indeed \eqref{boundary convergence} implies that if $\spt\,X \subset \R^{n+1} \setminus \Gamma$, then $\spt\,X \subset \R^{n+1} \setminus \Gamma_j$ for every $j$ large enough. Thus, $\de_1(M_j)\to 0$ and \eqref{first_variation} give
\[
\delta V(X) = \lim_{j \to \infty} \delta V_{j}(X) = \lim_{j \to \infty} \int_{M_{j}} X \cdot {\bf H}_{M_j} \, d\Ha^{n} = 0\,.
\]
for every $X \in C^{1}_{c}(\R^{n+1} \setminus \Gamma, \R^{n+1})$, as claimed.

\medskip

Since $\Gamma$ spans finitely many minimal surfaces without boundary singularities, \eqref{minimal varifold for Gamma proof} implies the existence of finitely many compact $\Ha^n$-rectifiable sets $\{N_i\}_{i=1}^k$ such that
\begin{equation} \label{eq:structure}
V = \sum_{i=1}^{k} q_i\, \var(N_i) \qquad \mbox{for some $q_i \in \N$}\,,
\end{equation}
where, for each $i$, $N_i\setminus\Gamma$ is connected, there exist a finite union $\Gamma^{(i)} = \bigcup_{m \in I^{(i)}} \Gamma_m$ of connected components of $\Gamma$ with
\begin{equation}\label{bar}
N_i\cap\Gamma=\Gamma^{(i)}={\rm Reg}^b(N_i)\,,\qquad\Sigma(N_i)\cap\Gamma=\emptyset\,,
\end{equation}
and a vector field $\nu^{\rm co}_i:\Gamma^{(i)}\to\SS^n$ with $\nu^{\rm co}_i(x)\in (T_x\Gamma^{(i)})^\perp\cap T_xN_i$ for $\Ha^{n-1}$-a.e. $x \in \Gamma^{(i)}$ such that
\begin{equation}
\label{proof comp tang Ni}
\int_{N_i}\,\Div^{N_i} X\,d\H^n=\int_{\Gamma^{(i)}}\nu_i^{\rm co}\cdot X\,d\H^{n-1}\qquad\forall X\in C^1_c(\R^{n+1};\R^{n+1})\,.
\end{equation}
Moreover, for each $i$,  ${\rm Reg}^{\circ}(N_i)$ has finitely many connected components $\{N_{i,\ell}\}_{\ell=1}^{L(i)}$ such that, for each $\ell$, $\cl(N_{i,\ell})\setminus\Sigma(N_i)$ is an orientable, smooth $n$-dimensional surface with boundary, whose boundary points are contained in $\Gamma^{(i)}$. As noticed in Remark \ref{remark allard}, \eqref{proof comp tang Ni} and Allard's regularity theorem imply
\begin{equation}
  \label{bar2}
  \H^n(\Sigma(N_i))=0\,.
\end{equation}
In particular, $N_i$ is $\H^n$-equivalent to ${\rm Reg}^{\circ}(N_i)$, so that can rewrite \eqref{eq:structure} as
\begin{equation} \label{varifold_structure}
V = \sum_{i=1}^{k} \sum_{\ell=1}^{L(i)} q_{i,\ell} \,\var(N_{i,\ell})\,,
\end{equation}
with $q_{i,\ell} = q_i$ for every $\ell = 1,\dots,L(i)$.

\bigskip

\noindent {\it Step two}: We now take the limit of the $M_j$'s in the sense of currents. Setting $T_{j} := \llbracket M_j \rrbracket $, by \eqref{uniform_bdry_mass}, $\sup_j\H^n(M_j)<\infty$, and by the Federer-Fleming compactness theorem \cite{federerfleming60}, see also \cite[Theorem 27.3]{SimonLN}), we have that $T_j\to T$ in the sense of currents, up to extracting subsequences, where $T$ is an integral current. The $C^1$-convergence of $\Gamma_j$ to $\Gamma$, $T_j\to T$, and $\partial T_j = \llbracket \Gamma_j \rrbracket$, are easily seen to imply $\partial T = \llbracket \Gamma \rrbracket$. Moreover, it is easily seen that, as Radon measures on $\R^{n+1}$,
\begin{equation} \label{measure_comparison}
\| T \| \leq \| V \|\,,
\end{equation}
since the mass of currents is lower semicontinuous, the weight of varifolds is continuous on sequences with bounded supports, and since $\|T_j\|=\H^n\llcorner M_j=\|V_j\|$. By \eqref{eq:structure},
\begin{equation}
  \label{pi}
  \spt(T)\,\, \subset\,\, \bigcup_{i=1}^{k} N_{i}\,\,\subset\,\,\Bigg[\Gamma\cup\bigcup_{i=1}^k\Sigma(N_i)\cup\bigcup_{i=1}^{k}\bigcup_{\ell=1}^{L(i)}N_{i,\ell}\Bigg]\,.
\end{equation}
Next we introduce the integral $n$-current $T_{i,\ell} := T \llcorner N_{i,\ell}$. Notice that $N_{i,\ell}$ is a smooth, connected $n$-dimensional surface, and that
\[
N_{i,\ell}\cap \spt\,\pa\,T_{i,\ell}=\emptyset
\]
since
\[
N_{i,\ell}\cap\spt\,\pa\,T_{i,\ell}\,\subset\,N_{i,\ell}\cap\spt\,\pa\,T = N_{i,\ell}\cap\Gamma\,\subset
{\rm Reg}^{\circ}(N_i)\cap\Gamma =\emptyset\,,
\]
thanks to \eqref{bar}. By the constancy theorem for integral currents (cf. \cite[Theorem 26.27]{SimonLN}), we find $\alpha_{i,\ell} \in \Z$ and realizations $\llbracket N_{i,\ell} \rrbracket$  of $N_{i,\ell}$ as multiplicity one integral currents such that
\[
T_{i,\ell} = \alpha_{i,\ell}\, \llbracket N_{i,\ell}\rrbracket\,.
\]
Since $\Ha^{n}(\Sigma(N_i)) = 0$, \eqref{pi} implies that
\begin{equation} \label{current_structure}
T = \sum_{i=1}^{k} \sum_{\ell=1}^{L(i)} \alpha_{i,\ell} \llbracket N_{i,\ell} \rrbracket\,.
\end{equation}
Applying the boundary operator in the sense of currents to \eqref{current_structure}, and recalling that $\pa T=\llbracket \Gamma \rrbracket$, we find that
\begin{equation} \label{current_structure b}
\llbracket\Gamma \rrbracket = \sum_{i=1}^{k} \sum_{\ell=1}^{L(i)} \alpha_{i,\ell} \,\pa\llbracket N_{i,\ell} \rrbracket\,.
\end{equation}
Recall that $\cl(N_{i,\ell})\setminus\Sigma(N_i)$ is a smooth surface with boundary, with boundary points contained in $\Gamma^{(i)}$. If $\Gamma_m$ is one of the components of $\Gamma^{(i)}$, then there is exactly one $\ell$ such that $\Gamma_m\cap{\rm Reg}^b[\cl(N_{i,\ell})\setminus\Sigma(N_i)]\ne \emptyset$, and, in correspondence to it,
\[
\Gamma_m\subset \cl(N_{i,\ell})\,.
\]
In particular, localizing \eqref{current_structure b} to $\Gamma_m$, and setting $\llbracket\Gamma_m \rrbracket=\llbracket\Gamma \rrbracket\llcorner\Gamma_m$, we have
\begin{equation} \label{current_structure b m}
\llbracket\Gamma_m \rrbracket =\sum_{i,\ell \, \colon \, \Gamma_m \subset \cl(N_{i,\ell})} \alpha_{i,\ell} \,\pa\llbracket N_{i,\ell} \rrbracket\llcorner\Gamma_m\,,
\end{equation}
and since $\Gamma_m$ itself is connected,
\[
\mbox{if $\Gamma_m\subset \cl(N_{i,\ell})$, then}\,\,\left\{
\begin{split}
\mbox{either}\qquad&\pa\llbracket N_{i,\ell} \rrbracket\llcorner\Gamma_m=\llbracket\Gamma_m \rrbracket\,,
\\
\mbox{or}\qquad&\pa\llbracket N_{i,\ell} \rrbracket\llcorner\Gamma_m=-\llbracket\Gamma_m \rrbracket\,.
\end{split}\right .
\]
In particular, for suitable $\sigma_{i,\ell}^m\in\{\pm 1\}$, we deduce from \eqref{current_structure b m} that
\begin{equation} \label{condition_coefficients_current}
\sum_{i,\ell \, \colon \, \Gamma_m \subset {\rm cl}(N_{i,\ell})} \sigma_{i,\ell}^{m}\, \alpha_{i,\ell} = 1\qquad \mbox{for every $m=1,\dots,M$}\,.
\end{equation}

\bigskip

\noindent {\it Step three}: We now link $T$ to $V$. Let $V_T$ denote the integral varifold associated with $T$, that is
\begin{equation}
  \label{VT}
  V_T=\sum_{i=1}^{k} \sum_{\ell=1}^{L(i)} |\alpha_{i,\ell}|\,\var(N_{i,\ell})\,.
\end{equation}
Noticing that
\[
V_{T_j}=\var(M_j)=V_j\,,
\]
and taking into account that $\Gamma_j$ is converging to $\Gamma$ in $C^1$, $V_j\to V$ as varifolds, and $T_j\to T$ as currents, we are allowed to apply White's theorem \cite[Theorem 1.2]{White_currents_varifolds} to deduce the existence of an integral varifold $W$ such that
\begin{equation} \label{white}
V = V_T + 2W\,.
\end{equation}
Therefore, it has to be
\[
2W = \sum_{i=1}^{k} \sum_{\ell=1}^{L(i)} (q_{i,\ell} - \abs{\alpha_{i,\ell}})\, \var(N_{i,\ell})\,,
\]
and the integrality condition of $W$ in turn yields that there exist $\beta_{i,\ell} \in \N$ such that
\begin{equation} \label{congruence_coefficients}
q_{i,\ell} - \abs{\alpha_{i,\ell}} = 2\beta_{i,\ell} \quad \mbox{ for every $i \in \{1,\dots,k\}$, $\ell \in \{1,\dots,L(i)\}$}\,.
\end{equation}
We now claim that, for every $m=1,...,M$,
\begin{equation} \label{odd_condition}
\sum_{i,\ell \, \colon \, \Gamma_m \subset {\rm cl}(N_{i,\ell})} q_{i,\ell}\,\, \mbox{ is an odd integer}\,.
\end{equation}
Indeed, using $a\equiv b\,\,{\rm mod}(2)$ as a shorthand for saying that $a$ and $b$ have the same parity, \eqref{congruence_coefficients} implies that
\[
\sum_{i,\ell \, \colon \, \Gamma_m \subset {\rm cl}(N_{i,\ell})} q_{i,\ell}\, \equiv \sum_{i,\ell \, \colon \, \Gamma_m \subset {\rm cl}(N_{i,\ell})} \abs{\alpha_{i,\ell}} \quad {\rm mod}(2)\,.
\]
At the same time
\begin{eqnarray*}
\sum_{i,\ell \, \colon \, \Gamma_m \subset {\rm cl}(N_{i,\ell})} (\abs{\alpha_{i,\ell}} - \sigma_{i,\ell}^{m}\, \alpha_{i,\ell})
&=& 2 \sum_{(\alpha_{i,\ell} > 0) \land (\sigma_{i,\ell}^{m} = -1)} \alpha_{i,\ell}\quad
- \quad 2 \sum_{(\alpha_{i,\ell} < 0) \land (\sigma_{i,\ell}^{m} = 1)} \alpha_{i,\ell} \,
\\
&&\equiv 0 \quad {\rm mod}(2)\,,
\end{eqnarray*}
so that, taking \eqref{condition_coefficients_current} into account, \eqref{odd_condition} holds.

\bigskip

\noindent {\it Step four}: We now exploit the assumption that $\Gamma$ is accessible from infinity,  to improve \eqref{odd_condition} and show that, for every $m=1,...,M$,
\begin{equation}
\sum_{i,\ell \, \colon \, \Gamma_m \subset {\rm cl}(N_{i,\ell})} q_{i,\ell} = 1\,;
\end{equation}
or, equivalently, that
\begin{equation} \label{one_condition}
\sum_{i \, \colon \, m \in I^{(i)}} q_{i}= 1 \,,\qquad\mbox{for every $m=1,...,M$}\,.
\end{equation}
Indeed, by \eqref{eq:structure} and by \eqref{proof comp tang Ni}, for every $X \in C^1_{c}(\R^{n+1}, \R^{n+1})$ we find
\begin{equation} \label{limit1}
\delta V(X)=\sum_{i=1}^{k} q_{i} \int_{N_{i}} \Div^{N_i} X\, d\H^{n} = \sum_{i=1}^{k} q_i \int_{\Gamma^{(i)}} X \cdot \nu^{\rm co}_i \, d\Ha^{n-1}\,,
\end{equation}
while, at the same time, $\int_{M_j}|H_{M_j}|\to 0$ and \eqref{first_variation} imply
\begin{equation} \label{limit2_partial1}
\de V(X)=\lim_{j \to \infty} \delta V_j(X) = \lim_{j \to \infty} \int_{\Gamma_j} X \cdot \nu_{\Gamma_j}^{M_j} \, d\Ha^{n-1}\,.
\end{equation}
By \eqref{boundary convergence}, $\Gamma_j = f_j(\Gamma)$ with $\|f_j-\Id\|_{C^1(\Gamma)}\to 0$ as $j\to\infty$, so that, by the area formula,
\begin{equation} \label{limit2_partial2}
\int_{\Gamma_j} X \cdot \nu_{\Gamma_j}^{M_j} \, d\Ha^{n-1} = \int_{\Gamma} (X \circ f_j) \cdot (\nu_{\Gamma_j}^{M_j} \circ f_j)\, {\bf J}^{\Gamma}f_j \, d\Ha^{n-1}\,,
\end{equation}
where the tangential Jacobian of $f_j$ along $\Gamma$ satisfies ${\bf J}^{\Gamma}f_j\to 1$ in $C^0(\Gamma)$, while of course $(X \circ f_j)\to X$ in $C^0(\Gamma)$. Considering that
\[
\| \nu_{\Gamma_j}^{M_j} \circ f_j \|_{L^\infty(\Gamma)} \leq 1
\]
for every $j$, by weak-$^*$ compactness, and up to extracting a subsequence, there exists a vector field $\sigma \in L^{\infty}(\Gamma,\R^{n+1})$ with $\| \sigma \|_{L^\infty(\Gamma)} \leq 1$ and a (not relabeled) subsequence such that
\[
\mbox{$\nu_{\Gamma_j}^{M_j} \circ f_j \weakstar \sigma$ in $L^{\infty}(\Gamma, \R^{n+1})$.}
\]
Combining \eqref{limit1}, \eqref{limit2_partial1}, \eqref{limit2_partial2} and this last information with
$(X \circ f_j)\,{\bf J}^{\Gamma}f_j\to X$ in $C^0(\Gamma)$, we find
\begin{equation} \label{limit2}
\sum_{i=1}^{k} q_i \int_{\Gamma^{(i)}} X \cdot \nu^{\rm co}_i = \int_{\Gamma} X \cdot \sigma \,,\qquad\forall X\in C^1_c(\R^{n+1};\R^{n+1})\,.
\end{equation}
If we set
\[
\hat{\Gamma}_{m} := \bigcup_{m' \neq m} \Gamma_{m'}\,,\qquad m=1,...,M\,,
\]
then \eqref{limit2} tested at $X \in C^{1}_{c}(\R^{n+1} \setminus \hat{\Gamma}_m;\R^{n+1})$ implies that for every such $X$,
\begin{equation} \label{limit3}
\sum_{i \, \colon \, m \in I^{(i)}} q_i \int_{\Gamma_m} X \cdot \nu^{\rm co}_i  = \int_{\Gamma_m} X \cdot \sigma \,,\qquad\mbox{for every $m=1,...,M$}\,.
\end{equation}
By arbitrariness of $X$,
\[
\sigma(x) = \sum_{i\, \colon \, m \in I^{(i)}} q_{i}\,  \nu^{\rm co}_i(x) \qquad \mbox{at $\Ha^{n-1}$-a.e. $x \in \Gamma_m$, and for every $m=1,...,M$}\,.
\]
This implies $\sigma(x)\in (T_x\Gamma)^\perp$ for $\H^{n-1}$-a.e. $x\in\Gamma$, as well that
\begin{equation} \label{key}
\bigg|\,\sum_{i\, \colon \, m \in I^{(i)}} q_{i}\,  \nu^{\rm co}_i(x)\bigg| \leq 1 \quad \mbox{for $\H^{n-1}$-a.e. $x \in \Gamma_m$}\,
\end{equation}
for every $m=1,...,M$.

\medskip

We are now ready to prove \eqref{one_condition}. Thanks to \eqref{odd_condition}, for every $m \in \{1,\dots,M\}$ we can find $p\in\N\cup\{0\}$ such that
\begin{equation}
  \label{2p1}
  \sum_{i\, \colon \, m \in I^{(i)}} q_{i} = 2p+1\,,
\end{equation}
and we want to show that it must always be $p=0$. Since $\Gamma$ is accessible from infinity, we can select $x_0 \in \Gamma_m$ such that \eqref{key} holds at $x=x_0$, and such that there exists a wedge $W$ (strictly contained in a half-space) with vertex at $x_0$ and containing $\Gamma^{{\rm co}}$. Up to rigid motions, we assume that $x_0=0$ and that
\[
W= \Big\{ (x_{1},x') \in \R^{n+1} \, \colon \, x_{1} \geq 0 \mbox{ and } \abs{x'} \leq x_{1} \tan(\phi) \Big\}\,,
\quad\mbox{for some $\phi\in[0,\pi/2)$}\,.
\]
The $n$-plane $\pi := e_{1}^{\perp} = \{x_1 = 0\}$ is then a supporting hyperplane to $\Gamma^{{\rm co}}$ at $x_0 = 0$. Furthermore, since $x_0 = 0$ is a point on $\Gamma_m \subset \Gamma$, the tangent space $T_{0}\Gamma$ is a linear subspace of $\pi$. We may assume that $T_{0} \Gamma = \{x_{1} = 0 = x_{n+1}\}$. Finally, by the classical convex hull property of minimal surfaces, we have $N_{i}\subset\Gamma^{{\rm co}} \subset W$ for every $i$.

\medskip

Now, for every $i$ such that $m \in I^{(i)}$, $\nu^{(i)} :=-\nu^{\rm co}_i(0)$ is a unit vector in the two-dimensional plane $(T_0\Gamma)^\perp = \{ x_{j} = 0 \mbox{ for } j=2,\dots,n\}$. In the coordinates $(x_{1}, x_{n+1})$, thanks to $N_i\subset W$, we find that $\nu^{(i)}$ points {\it inwards} $W$, and thus that
\[
\nu^{(i)} = \big( \cos \theta_i, \sin \theta_i \big)\qquad\mbox{for some $|\theta_i| \leq \phi$}\,.
\]
If $\{i_{1},\dots,i_{r(m)}\} \subset \{1,\dots,k\}$ is the set of indexes $i$ such that $m \in I^{(i)}$, we define the vectors $v_{1}, \dots, v_{2p+1}$ by setting
\begin{align*}
& v_{1} = v_{2} = \dots = v_{q_{i_1}} := \nu^{(i_1)}\,,\\
& v_{q_{i_1}+1} = v_{q_{i_1}+2} = \dots = v_{q_{i_{1}} + q_{i_{2}}} := \nu^{(i_2)}\,,\\
&\dots \\
& v_{2p+2 -q_{i_r}} = \dots = v_{2p+1} := \nu^{(i_r)}\,,
\end{align*}
so that, by \eqref{key} applied at $x=x_0=0$,
\[
-\sum_{i\, \colon \, m \in I^{(i)}} q_{i}\,  \nu^{\rm co}_i(0)
=\sum_{i\, \colon \, m \in I^{(i)}} q_{i}\, \nu^{(i)} = \sum_{h =1}^{2p+1} v_{h}
\]
has length $\le 1$. We conclude the proof by showing that, if $p\ge 1$, then
\begin{equation} \label{contradiction}
\bigg|\sum_{h=1}^{2p+1} v_{h}\bigg| > 1\,.
\end{equation}
A proof of \eqref{contradiction} is in \cite[Lemma 6.16]{DLR}. For the reader's convenience and for the sake of clarity, we {\it verbatim} repeat the argument used in \cite{DLR}. First, we order the vectors $v_{h}$ in such a way that $\theta_{1} \leq \theta_{2} \leq \dots \leq \theta_{2p+1}$. For every $j \leq p$, set $w_{j} := v_{j} + v_{2p+2-j}$. Using simple geometric considerations, one immediately sees that $w_{j}$ is a positive multiple of the vector
\[
\bigg( \cos \Big(\frac{\theta_{j} + \theta_{2p+2-j}}{2}\Big), \sin\Big( \frac{\theta_{j} + \theta_{2p+2-j}}{2}\Big) \bigg)\,.
\]
Since $\theta_{j} \leq \theta_{p+1} \leq \theta_{2p+2-j}$, the angle between the vectors $w_j$ and $v_{p+1}$ is
\[
\sphericalangle (w_{j}, v_{p+1}) = \bigg|\theta_{p+1} - \frac{\theta_{j} + \theta_{2p+2-j}}{2}\bigg| \leq \frac{\theta_{2p+2-j} - \theta_j}{2} \leq \phi < \frac{\pi}{2}\,,
\]
so that $w_{j} \cdot v_{p+1} > 0$. Then, we can use the Cauchy-Schwarz inequality to estimate
\[
\bigg|\sum_{h=1}^{2p+1} v_{h}\bigg| \geq \bigg( \sum_{h=1}^{2p+1} v_{h} \bigg) \cdot v_{p+1} = \sum_{j=1}^{p} w_{j} \cdot v_{p+1} + \abs{v_{p+1}} > 1\,.
\]
This proves \eqref{contradiction}.

\bigskip

\noindent {\it Step five}: We conclude the proof. By \eqref{one_condition}, for every $m=1,...,M$, adding up over those $i$ such that $m\in I^{(i)}$, we find $\sum q_i=1$. By exploiting this fact, we find that:
\begin{itemize}
\item $q_{i}\in\{0,1\}$ for every $i \in \{1,\dots,k\}$; in other words, it cannot be $q_i\ge2$;
\item if $q_i = 1$, then $q_{i'} = 0$ for any $i' \neq i$ such that $I^{(i)} \cap I^{(i')} \neq \emptyset$: hence, for every $m = 1,\dots,M$ there is one and only one $i = i_m$ with $m \in I^{(i_m)}$ and $q_{i_m} = 1$;
\item from \eqref{congruence_coefficients}: since $q_{i} \in \{0,1\}$ for every $i$, $\beta_{i,\ell} = 0$ for every $i \in \{1,\dots,k\}$ and $\ell \in \{1,\dots,L(i)\}$. Thus, if $q_{i} = 1$ then $\alpha_{i,\ell} = \pm 1$ for every $\ell$; if $q_{i} = 0$ then $\alpha_{i,\ell}=0$ for every $\ell$.
\end{itemize}
We can thus argue as follows. We set $m_1:=1$, and let $i_1$ be the only index in $\{1,\dots,k\}$ such that $1 \in I^{(i_1)}$ and $q_{i_1} = 1$. Next, let $m_2 := \min\{m \in 1,\dots,M \, \colon \, m \notin I^{(i_1)}\}$, and let $i_2$ be the corresponding index. Proceeding inductively, after a finite number $h$ of steps the set $\{ m \, \colon \, m \notin I^{(i_1)} \cup \dots \cup I^{(i_h)} \}$ will be empty. We finally set
\[
N := \bigcup_{r=1}^h N_{i_r}\,,
\]
and claim that $N$ satisfies the conclusions of the theorem. In order to verify \eqref{bdry in the sense of varifolds}, we define $\nu:\Gamma\to\SS^n$ by
\[
\nu(x)=\nu^{{\rm co}}_{i_m}(x)\in (T_x\Gamma)^\perp\qquad\mbox{if $x\in\Gamma_m$}\,,
\]
and use \eqref{proof comp tang Ni}. Noticing that $q_i=0$ if $i\neq i_r$ for every $r=1,...,h$, and $q_i=1$ otherwise, we see that
\[
V=\sum_{i=1}^kq_i\,\var(N_i)=\sum_{r=1}^h \var(N_{i_r})=\var(N)\,,
\]
so that $\var(M_j)\to\var(N)$, which is the second conclusion in \eqref{tesi thm}; and as for the first conclusion in \eqref{tesi thm}, $\|T\|\le\|V\|$ implies
\begin{equation} \label{current_final_check}
T = \sum_{r=1}^{h} \sum_{\ell=1}^{L(i_r)} \alpha_{i_r,\ell} \llbracket N_{i_r,\ell} \rrbracket\,,
\end{equation}
with $\alpha_{i_r,\ell} = \pm 1$. Taking into account that $\H^n(\Gamma \cup \bigcup_{r=1}^{h} \Sigma(N_{i_r})) =0$, we can now define a Borel orientation $\nu_{N} \colon N \to \Sf^n$ by setting $\left.\nu_{N}\right|_{N_{i_r,\ell}} := \alpha_{i_r,\ell}\, \nu_{N_{i_r,\ell}}$, where $\nu_{N_{i_r,\ell}}$ is the orientation defining the current $\llbracket N_{i_r,\ell} \rrbracket$. With this definition, equation \eqref{current_final_check} reads
\begin{equation} \label{victory}
T = \llbracket N, \star\nu_N,1 \rrbracket\,,
\end{equation}
which implies that $\llbracket M_j \rrbracket \to \llbracket N, \star \nu_N, 1 \rrbracket$ in the sense of currents. This completes the proof of \eqref{bdry in the sense of currents}, thus of the theorem.
\end{proof}

\section{Sharp decay estimates} \label{section decay rates} In this last section we refine the conclusions of Theorem \ref{thm main} with sharp quantitative estimates under the additional assumptions that: (i) the boundaries of the surfaces $M_j$ are fixed, i.e., we assume $\Gamma_j=\Gamma$; (ii) for some fixed $p>n$,
\begin{equation} \label{ass Lp bound mean curvature}
\sup_{j \in \Na} \int_{M_j} |H_{M_j}|^p  < \infty\,;
\end{equation}
and (iii) the limit minimal surface $N$ is classical, that is, $\Sigma(N)=\emptyset$. Under these assumptions, by combining Allard's regularity theorem \cite{Allard} and the implicit function theorem one can show the existence of smooth functions $u_j:N\to\R$ with $u_j=0$ on $\pa N$, and such that
\[
M_j=\Big\{x+u_j(x)\,\nu_{N}(x):x\in N\Big\}\,,\qquad\lim_{j\to\infty}\|u_j\|_{C^1(N)}=0\,.
\]
Assumption (i) is not really needed to parameterize $M_j$ over $N$. Indeed, one could obtain a global parametrization (possibly with non-trivial tangential components) as soon as $\Gamma_j$ converges to $\Gamma$ in, say, $C^{1,\a}$, see \cite{CiLeMaIC1,CiLeMaIC2}. Assumptions (ii) and (iii) are instead needed to have the quantitative regularity estimates of Allard, and the possibility to apply them to the $M_j$'s for proving the graphicality property. We omit the details of the argument leading to the existence of the functions $u_j$, since it has appeared many other times in the literature: for instance, see \cite{FigalliMaggiARMA,CiLeMaIC1,CiLeMaIC2,krummelmaggi,ciraolomaggi2017}.

\bigskip

We now collect some formulas concerning the geometry of almost-flat normal Lipschitz graphs over a smooth compact embedded orientable $n$-dimensional surface $N \subset \R^{n+1}$, and prove a basic $C^0$-estimate; see in particular \eqref{c0 estimate pre fuglede} below, whose proof should be compared with the argument of \cite[Section 4]{krummelmaggi}. Let us consider a Lipschitz function $u:N\to\R$ with $u=0$ on $\pa N$ and $\| u \|_{C^0(N)}+{\rm Lip}(u) \leq \e$ small enough depending on $N$. We set
\[
\psi_u(p) := p + u(p) \nu_N(p)\,, \qquad p \in N\,,
\]
and let $\Psi_u:=\psi_u(N)$. We also assume that $\Psi_u$ has a distributional mean curvature $H_{\Psi_u}\in L^1(\Psi_u)$, so that
\[
\int_{\Psi_u}\Div^{\Psi_u}X\,d\H^n=\int_{\Psi_u}X\cdot\nu_{\Psi_u}\,H_{\Psi_u}\,d\H^n\qquad\forall X\in C^1_c(\R^{n+1};\R^{n+1})
\]
where $\nu_{\Psi_u}$ is the normal to $\Psi_u$ induced by $\nu_N$ through $\psi_u$. By the area formula, it holds for every bounded Borel measurable function $g$ on $\Psi_u$ that
\begin{equation} \label{area of a graph}
\int_{\Psi_{u}}g\,d\H^n=\int_N\,(g\circ\psi_u)\,{\bf J}^N\psi_{u}\,.
\end{equation}
For every $\varphi \in C^{1}_{c}(N)$ and $t$ in a neighborhood of $0$, we consider the variation
\begin{eqnarray*}
\Psi_{u+t\vphi}&=&\Big\{p+u(p)\nu_N(p)+t\,\vphi(p)\,\nu_N(p):p\in N\Big\}
\\
&=&  \Big\{q+t\,(\vphi\,\nu_N)(\pi_N(q)):q\in\Psi_u\Big\}\,,
\end{eqnarray*}
where we denote by
\[
\pi_N:B_{\e_0}(N)\to N
\]
the smooth nearest point projection of the $\e_0$-neighborhood of $N$ onto $N$, and where of course we are assuming $\e<\e_0$. By the standard first variation formula for the area applied to $\Psi_u$ we find that
\begin{eqnarray}\nonumber
\frac{d}{dt}\Bigg|_{t=0}\H^n(\Psi_{u+t\vphi})&=&
\frac{d}{dt}\Bigg|_{t=0}\H^n\Big(\Big[\Id+t\,[(\vphi\nu_N)\circ\pi_N]\Big](\Psi_u)\Big)
\\\label{v20 1}
&=&\int_{\Psi_u}H_{\Psi_u}\nu_{\Psi_u}\cdot[(\vphi\,\nu_N)\circ\pi_N]\,.
\end{eqnarray}
Since $\pi_N$ restricted to $\Psi_u$ is the inverse of $\psi_u$, we have
\begin{eqnarray}
 \label{this formula}
  \int_N\left. \frac{d}{dt}\right|_{t=0}{\bf J}^N\psi_{u+t\vphi} &=&\frac{d}{dt}\Bigg|_{t=0}\H^n(\Psi_{u+t\vphi})
  \\\nonumber
  &=&\int_{N}\,\vphi\,\big(H_{\Psi_u}\circ\psi_u\big)\,(\nu_{\Psi_u}\circ\psi_u)\cdot\nu_N\,{\bf J}^N\psi_u\,.
\end{eqnarray}
We now want to compute $H_{\Psi_u}$ by using local coordinates. Let us cover $N$ by open sets $A\subset\R^{n+1}$ such that at every $p\in A\cap N$ we can define an orthonormal frame $\{\tau_i(p)\}_{i=1}^n$ for $T_pN$ with $\nabla_{\tau_i}\nu_N=\k_i\,\tau_i$, where $\k_1 \leq \k_2 \leq \dots\leq \k_n$ denote the principal curvatures of $N$. Setting $\pa_iu=\nabla_{\tau_i}u$ and $Du=(\pa_1u,...,\pa_nu)\in\R^n$, we find
\begin{equation}\label{jacobian locally}
{\bf J}^N\psi_{u}=G(p,u,D u)\quad\mbox{for $p\in A\cap N$}\,,
\end{equation}
where $G=G(p,z,\xi):(A\cap N)\times\R\times\R^n\to\R$ is given by
\begin{equation} \label{function G}
G(p,z,\xi)=\prod_{i=1}^n(1+\k_i\,z)\,\sqrt{1+\sum_{i=1}^n\Big(\frac{\xi_i}{1+\k_i z}\Big)^2}\,.
\end{equation}
Noticing that, on $A\cap N$,
\[
\nu_{\Psi_u}\circ\psi_u=\frac{\nu_N-\sum_{i=1}^n\pa^*_iu\,\tau_i}{\sqrt{1+|D^*u|^2}}\,,
\qquad\pa_i^*u=\frac{\pa_i u}{1+u\,\k_i\,,}\,,\qquad D^*u=(\pa_1^*u\,\dots,\pa_n^*u)\in\R^n\,,
\]
we find that
\[
(\nu_{\Psi_u}\circ\psi_u)\cdot\nu_N=\frac1{\sqrt{1+|D^*u|^2}}\qquad\mbox{on $A\cap N$}\,.
\]
By \eqref{jacobian locally} we also have, again on $A\cap N$,
\begin{equation} \label{this other formula}
\left. \frac{d}{dt}\right|_{t=0}{\bf J}^N\psi_{u+t\vphi}  = \frac{\pa G}{\pa z}(p,u,Du) \varphi + \sum_{i=1}^n\frac{\pa G}{\pa\xi^i}\,(p,u,D u) \pa_i\vphi \,.
\end{equation}
Thus, if we test \eqref{this formula} with $\vphi \in C^1_c(A\cap N)$, and then we integrate by parts, we obtain
\begin{equation}
  \label{rewrite}
  (H_{\Psi_u}\circ\psi_u)\,\frac{{\bf J}^N\psi_u}{\sqrt{1+|D^*u|^2}}=
\frac{\pa G}{\pa z}(p,u,D u) -\sum_{i=1}^n\pa_i\Big( \frac{\pa G}{\pa\xi^i}(p,u,D u)\Big)\,.
\end{equation}
To understand the structure of \eqref{rewrite}, we compute for the $\xi$-gradient of $G$,
\begin{equation} \label{xi-gradient of G}
\frac{\pa G}{\pa\xi_i}(p,z,\xi)=g(p,z,\xi)\frac{\xi_i}{(1+\k_i z)^2}\qquad g(p,z,\xi)=\frac{\prod_{i=1}^n(1+\k_i\,z)}{\sqrt{1+\sum_{i=1}^n\Big(\frac{\xi_i}{1+\k_i z}\Big)^2}}\,;
\end{equation}
and for the $z$-derivative of $G$,
\begin{equation} \label{z-derivative of G}
\frac{\pa G}{\pa z}(p,z,\xi)
=G(p,z,\xi)\sum_{j=1}^n\frac{\k_j}{1+z\,\k_j}\Bigg\{1-\frac{\xi_j^2}{(1+z\,\k_j)^2\Big(1+\sum_{i=1}^n\Big(\frac{\xi_i}{1+\k_i z}\Big)^2\Big)}\Bigg\}\,.
\end{equation}
By exploiting $\|u\|_{C^0(N)}+\Lip(u)<\e$, we thus find that for measurable functions $a_i$ and $b$ on $N\cap A$ with
\[
\|a_i-1\|_{L^\infty(N\cap A)}+\|b-1\|_{L^\infty(N\cap A)}\le C(N)\e
\]
we have
\[
\sum_{i=1}^n\pa_i\Big( \frac{\pa G}{\pa\xi^i}(p,u,D u)\Big)=\sum_{i=1}^n\pa_i(a_i\,\pa_iu)\,,
\qquad
\frac{\pa G}{\pa z}(p,u,Du)=b\,\sum_{i=1}^n\frac{\k_i}{1+\k_i\,u}\,.
\]
Using $0=H_N=\sum_{i=1}^n\k_i$ we get
\[
\frac{\pa G}{\pa z}(p,u,Du)=b\,\sum_{i=1}^n\Big(\frac{\k_i}{1+\k_i\,u}-\k_i\Big)
=-b\,u\,\,\sum_{i=1}^n\frac{\k_i^2}{1+\k_i\,u}=-c\,u
\]
where $c$ is a  non-negative, bounded measurable function defined on $A\cap N$. Overall \eqref{rewrite} can be rewritten as
\begin{equation}
  \label{rewritex}
  (H_{\Psi_u}\circ\psi_u)\,d=-c\,u-\sum_{i=1}^n\pa_i(a_i\,\pa_iu)\qquad\mbox{on $A\cap N$}
\end{equation}
where we have set for brevity $d={\bf J}^N\psi_u/\sqrt{1+|D^*u|^2}$, so that $\|d-1\|_{L^\infty(A\cap N)}\le C(N)\,\e$.

We finally formulate \eqref{rewritex} as an elliptic PDE on a domain of $\R^n$. To this end, up to decrease the size of $A$, we can introduce coordinates on $A\cap N$ by means of an embedding $F:U\subset\R^n\to \R^{n+1}$ of an open set $U$ with smooth boundary in the unit ball of $\R^n$ with $A\cap N=F(U)$ and $A\cap\bd(N)=F(\bd(U))$. We set $\s_i=(\pa F/\pa x^i)\circ F^{-1}$ so that $\{\s_i(p)\}_{i=1}^n$ is also a frame of $T_pN$ for each $p\in A\cap N$, and we have
\[
\tau_i=\sum_{k=1}^n t_i^k\,\s_k
\]
for suitable functions $t_k^i\in C^\infty(A\cap N)$. Setting $\bar f=f\circ F$ for functions $f$ defined on $F(U)=A\cap N$, we notice that if $\vphi\in C^1_c(A\cap N)$, then
\begin{eqnarray*}
  -\int_N\vphi\sum_{i=1}^n\pa_i(a_i\,\pa_iu)&=&\int_{A\cap N}\sum_{i=1}^n a_i\nabla_{\tau_i}u\,\nabla_{\tau_i}\vphi
  \\
  &=&\int_{A\cap N}\sum_{i,k,h=1}^n a_i\,t_i^k\,t_i^h\,\nabla_{\s_k}u\,\nabla_{\s_h}\vphi
  \\
  &=&\int_{U}\,JF\,\sum_{i,k,h=1}^n \bar{a}_i\,\bar{t}_i^k\,\bar{t}_i^h\,\frac{\pa\bar{u}}{\pa x^k}\,\frac{\pa\bar{\vphi}}{\pa x^h}
  \\
  &=&\int_{U}\,\Lambda\,\nabla\bar{u}\cdot\nabla \bar{\vphi}
\end{eqnarray*}
for the symmetric, bounded and uniformly elliptic tensor field
\[
\Lambda(x)=JF(x)\,\sum_{i=1}^n\,\bar{a}_i(x)\,v_i(x)\otimes v_i(x)\qquad v_i(x)=\sum_{k=1}^n\bar{t}^k_i(x)\,e_k\,.
\]
Notice that the ellipticity of $\Lambda$ relies on the facts that $\{v_i(x)\}_{i=1}^n$ is a basis of $T_x\R^n$, $F$ is an embedding, and $\{\tau_i(p)\}_{i=1}^n$ is a basis of $T_pN$. Thus we can understand \eqref{rewritex} as
\begin{equation}
  \label{rewrite2}
  (H_{\Psi_u}\circ\psi_u\circ F)\,\bar{d}=-\bar{c}\,\bar{u}-\Div(\Lambda\nabla\bar{u})\qquad\mbox{on $U$}\,,
\end{equation}
where $\bar{c}$ is non-negative and bounded, and $\|\bar{a}_i-1\|_{L^\infty(U)}+\|\bar{d}-1\|_{L^\infty(U)}\le C(N)\,\e$. By \cite[Theorem 8.17, Theorem 8.25]{GT}, we find that, for any ball $B_{4r}$ contained in the unit ball where $U$ is a smooth domain with boundary,
\[
\|\bar{u}\|_{C^0(B_r\cap U)}\le C(N,r,q)\,\Big\{\|\bar u\|_{L^2(B_{2r}\cap U)}+\|(H_{\Psi_u}\circ\psi_u\circ F)\|_{L^q(B_{2r}\cap U)}\Big\}\,,
\]
provided $q>n/2$ and assuming that the right-hand side is finite. Changing variables one more time, and exploiting a covering argument, we thus find
\begin{equation}
  \label{c0 estimate pre fuglede}
\|u\|_{C^0(N)}\le C(N,q)\,\Big\{\|u\|_{L^2(N)}+\|H_{\Psi_u}\circ\psi_u\|_{L^q(N)}\Big\}\,,\qquad\forall q>\frac{n}2\,.
\end{equation}
We now assume the strict stability of $N$, and use the formulas above in order to obtain a sharp quantitative estimates for {\it Lipschitz} graphs which only involves a very weak notion of deficit.

\begin{theorem}[Weak-deficit estimate on Lipschitz graphs]\label{thm graphs}
  Let $N$ be a smooth compact orientable $n$-dimensional surface in $\R^{n+1}$ with boundary, and let $u:N\to\R$ be a Lipschitz function with $u=0$ on $\pa N$. Consider the almost-mean curvature deficit
  \[
  \de(u):=\sup\Big\{\left.\int_N\frac{d}{dt}\right|_{t=0}{\bf J}^N\psi_{u+t\vphi}
  :\int_N|\nabla\vphi|^2\le 1\,,\vphi\in H^1_0(N)\Big\}\,.
  \]
  If $H_N \equiv 0$ and $N$ is strictly stable, in the sense that, for some $\l>0$,
  \[
  \int_{N} \abs{\nabla \varphi}^2 - \abs{A_{N}}^2 \varphi^2 \geq \lambda \int_{N} \varphi^2 \qquad \forall \varphi \in H^{1}_0(N)\,,
  \]
  then there exist positive constants $\e$ and $C$ depending on $N$ such that the condition
  \begin{equation}
    \label{epsilon}
      \|u\|_{C^0(N)}+\Lip(u)\le \e
  \end{equation}
  implies
  \begin{equation}
    \label{tesi grafici}
      0\le \H^n(\Psi_u)-\H^n(N)\le C(N)\,\de(u)^2\,,
  \end{equation}
  \begin{equation}
    \label{stima w12}
    \|u\|_{H^1(N)}\le C(N)\,\de(u)\,.
  \end{equation}
  In particular, if $\Psi_u$ has a distributional mean curvature $H_{\Psi_u}\in L^2(\Psi_u)$, then
  \begin{equation}\label{L2 estimates on graphs}
    0\le \H^n(\Psi_u)-\H^n(N)\le C(N)\,\int_{\Psi_u}\,|H_{\Psi_u}|^2\,,
  \end{equation}
  and if $H_{\Psi_u}\in L^q(\Psi_u)$ for some $q>n/2$, then
  \begin{equation}
    \label{stima C0 finale}
    \|u\|_{C^0(N)}\le C(N,q)\,\Big\{\|H_{\Psi_u}\|_{L^2(\Psi_u)}+\|H_{\Psi_u}\|_{L^q(\Psi_u)}\Big\}\,.
  \end{equation}
\end{theorem}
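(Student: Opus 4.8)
The plan is to reduce the whole theorem to the study of the linear functional
\[
L_v(\vphi):=\left.\int_N\frac{d}{dt}\right|_{t=0}{\bf J}^N\psi_{v+t\vphi}\,,\qquad \vphi\in H^1_0(N)\,,
\]
defined for any Lipschitz $v$ vanishing on $\pa N$, which is linear in $\vphi$ by the chain rule, and whose operator norm on $H^1_0(N)$ with respect to $\vphi\mapsto\|\nabla\vphi\|_{L^2(N)}$ is exactly $\de(v)$. By \eqref{this formula} and \eqref{this other formula}, in the local frames used there $L_v(\vphi)=\int_N\big[\tfrac{\pa G}{\pa z}(p,v,Dv)\,\vphi+\sum_i\tfrac{\pa G}{\pa\xi_i}(p,v,Dv)\,\pa_i\vphi\big]$, with $G$ the smooth integrand \eqref{function G}; since $N$ is minimal, $L_0\equiv 0$. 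A first observation is that strict stability upgrades to full $H^1$-coercivity, $\int_N(|\nabla\vphi|^2-|A_N|^2\vphi^2)\ge c(N)\,\|\vphi\|_{H^1(N)}^2$ for all $\vphi\in H^1_0(N)$: indeed, a convex combination of the stability inequality with the crude bound $\int_N(|\nabla\vphi|^2-|A_N|^2\vphi^2)\ge\int_N|\nabla\vphi|^2-\|A_N\|_{L^\infty(N)}^2\int_N\vphi^2$, chosen to cancel the $\int_N\vphi^2$ terms, produces $\int_N(|\nabla\vphi|^2-|A_N|^2\vphi^2)\ge\tfrac{\l}{\l+\|A_N\|_{L^\infty(N)}^2}\int_N|\nabla\vphi|^2$, after which the Poincar\'e inequality concludes.

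Next I would Taylor-expand $L_v$ in $v$ around $v=0$. Since $L_0=0$ we have $L_u(\vphi)=\int_0^1\tfrac{d}{ds}L_{su}(\vphi)\,ds$, and $\tfrac{d}{ds}L_{su}(\vphi)$ is the second variation of the area functional $w\mapsto\H^n(\Psi_w)$ at $su$ in the directions $u$ and $\vphi$, i.e. an integral of products of $u,\vphi,\nabla u,\nabla\vphi$ whose coefficients are second derivatives of $G$ evaluated at $(p,su,sDu)$. Because $G$ is smooth and, by \eqref{epsilon}, $|su|+|sDu|\le\e$, these coefficients are within $C(N)\e$ of their values at $(p,0,0)$, and computing the latter via $H_N=\sum_i\k_i=0$ and the evenness of $G$ in $\xi$ --- just as in the passage from \eqref{z-derivative of G} to \eqref{rewritex} --- gives $\tfrac{d}{ds}L_{su}(\vphi)=\int_N(\nabla u\cdot\nabla\vphi-|A_N|^2\,u\,\vphi)+E_s(u,\vphi)$ with $|E_s(u,\vphi)|\le C(N)\,\e\,\|u\|_{H^1(N)}\|\vphi\|_{H^1(N)}$ uniformly in $s\in[0,1]$. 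Choosing $\vphi=u$, integrating in $s$, and using $|L_u(u)|\le\de(u)\,\|\nabla u\|_{L^2(N)}$ together with the coercivity above yields $c(N)\|u\|_{H^1(N)}^2\le\de(u)\|u\|_{H^1(N)}+C(N)\e\|u\|_{H^1(N)}^2$, which for $\e$ so small that $C(N)\e\le c(N)/2$ is \eqref{stima w12}. For \eqref{tesi grafici} I would use $\H^n(\Psi_w)=\int_NG(p,w,Dw)$ to write $\H^n(\Psi_u)-\H^n(N)=\int_0^1 L_{tu}(u)\,dt$, substitute $L_{tu}(u)=\int_0^t\tfrac{d}{ds}L_{su}(u)\,ds=t\int_N(|\nabla u|^2-|A_N|^2u^2)+O\big(\e\|u\|_{H^1(N)}^2\big)$ and integrate in $t$, obtaining $\H^n(\Psi_u)-\H^n(N)=\tfrac12\int_N(|\nabla u|^2-|A_N|^2u^2)+O\big(\e\|u\|_{H^1(N)}^2\big)$, which is $\ge 0$ for $\e$ small by coercivity and $\le C(N)\|u\|_{H^1(N)}^2\le C(N)\de(u)^2$ by \eqref{stima w12}.

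The two ``in particular'' statements then follow by feeding geometric information into $\de(u)$. If $\Psi_u$ has distributional mean curvature in $L^2(\Psi_u)$, then \eqref{this formula}, the bounds $|(\nu_{\Psi_u}\circ\psi_u)\cdot\nu_N|\le1$ and $\tfrac12\le{\bf J}^N\psi_u\le C(N)$, the change of variables \eqref{area of a graph}, and the Cauchy--Schwarz and Poincar\'e inequalities give $|L_u(\vphi)|\le C(N)\|H_{\Psi_u}\|_{L^2(\Psi_u)}\|\nabla\vphi\|_{L^2(N)}$, hence $\de(u)\le C(N)\|H_{\Psi_u}\|_{L^2(\Psi_u)}$; combined with \eqref{tesi grafici} this is \eqref{L2 estimates on graphs}. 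For \eqref{stima C0 finale} I would start from the elliptic estimate \eqref{c0 estimate pre fuglede}, bound $\|u\|_{L^2(N)}\le\|u\|_{H^1(N)}\le C(N)\de(u)\le C(N)\|H_{\Psi_u}\|_{L^2(\Psi_u)}$ using \eqref{stima w12} and the previous step, and bound $\|H_{\Psi_u}\circ\psi_u\|_{L^q(N)}\le C(N)\|H_{\Psi_u}\|_{L^q(\Psi_u)}$ again by \eqref{area of a graph} and $\tfrac12\le{\bf J}^N\psi_u$.

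The step I expect to be the main obstacle is making the Taylor expansion of $L_v$ in $v$ rigorous for merely \emph{Lipschitz} $v$: one must differentiate $G(p,\cdot,\cdot)$ twice and check that the resulting coefficients, evaluated along $(p,su,sDu)$ with $|su|+|sDu|\le\e$, are bounded, measurable and $C(N)\e$-close to their values at $(p,0,0)$, and that the remainder integrals are controlled by $\|u\|_{H^1(N)}\|\vphi\|_{H^1(N)}$. Since $u$ Lipschitz makes $u$ and $Du$ bounded and $G$ is smooth, every integrand involved lies in $L^\infty$ and no genuine analytic difficulty arises: it is the same bookkeeping already performed in the excerpt to pass from \eqref{this formula} to \eqref{rewritex}, now carried out for the bilinear second-variation form instead of for the scalar operator, together with the explicit evaluations $\tfrac{\pa^2 G}{\pa\xi_i\,\pa\xi_j}(p,0,0)=\delta_{ij}$, $\tfrac{\pa^2 G}{\pa z\,\pa\xi_i}(p,0,0)=0$ and $\tfrac{\pa^2 G}{\pa z^2}(p,0,0)=-\sum_i\k_i^2=-|A_N|^2$ (read off from \eqref{xi-gradient of G} and \eqref{z-derivative of G}), which make the quadratic form collapse onto $\int_N(|\nabla\vphi|^2-|A_N|^2\vphi^2)$.
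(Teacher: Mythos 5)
Your argument is correct, and in its core step it takes a genuinely different (and self-contained) route from the paper's. The paper does not identify the second variation explicitly: it splits $\frac{d}{ds}\big|_{s=0}{\bf J}^N\psi_{tu+su}=R+tS$ with $S=\frac{d}{ds}\big|_{s=0}{\bf J}^N\psi_{u+su}$ and $|R|\le C(N)\,\e\,(u^2+|\nabla u|^2)$, deduces the one-sided bound $\H^n(\Psi_u)-\H^n(N)\le\de(u)\|\nabla u\|_{L^2(N)}+C(N)\e\int_N|\nabla u|^2$, and then closes the loop by \emph{citing} the classical strict-stability lower bound $\H^n(\Psi_u)-\H^n(N)\ge C(N)^{-1}\int_N|\nabla u|^2$ (de Philippis--Maggi, Lemma 3.2) to extract $\|\nabla u\|_{L^2(N)}\le C(N)\de(u)$. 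You instead differentiate the first-variation functional $L_{su}$ in $s$, identify the resulting bilinear form at $(p,0,0)$ with the Jacobi form $\int_N(\nabla u\cdot\nabla\vphi-|A_N|^2u\vphi)$ via the Hessian of $G$ (your values $\pa^2_{\xi_i\xi_j}G=\delta_{ij}$, $\pa^2_{z\xi_i}G=0$, $\pa^2_{zz}G=-|A_N|^2$ at the origin are correct, the last one using $H_N=0$), upgrade strict stability to full $H^1$-coercivity by the standard interpolation you describe, and obtain both \eqref{stima w12} and the two-sided expansion $\H^n(\Psi_u)-\H^n(N)=\tfrac12\int_N(|\nabla u|^2-|A_N|^2u^2)+O(\e\|u\|_{H^1(N)}^2)$ in one stroke. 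What this buys is that you re-prove, rather than quote, the stability lower bound, and you get non-negativity and the upper bound in \eqref{tesi grafici} from the same expansion; the cost is computing the full Hessian of $G$ and checking (as you correctly flag) that its entries along $(p,su,sDu)$ are measurable, bounded, and $C(N)\e$-close to their values at the origin uniformly in $s$ and in the chart --- which is exactly the bookkeeping the paper performs for its term $R$, and causes no difficulty for Lipschitz $u$. Your treatment of \eqref{deficit comparison}-type bound $\de(u)\le C(N)\|H_{\Psi_u}\|_{L^2(\Psi_u)}$ and of \eqref{stima C0 finale} via \eqref{c0 estimate pre fuglede} coincides with the paper's.
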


\begin{proof}[Proof of Theorem \ref{thm graphs}]
We first notice that if $H_{\Psi_u}\in L^2(\Psi_u)$, then
\begin{equation}\label{deficit comparison}
\delta(u)^2 \leq C(N) \int_{\Psi_u}\,|H_{\Psi_u}|^2 \,.
\end{equation}
Indeed, by \eqref{this formula}, by  H\"older inequality and by the Poincar\'e inequality on $N$, if $\vphi\in C^1_c(N)$ and $\pi_N$ is the normal projection over $N$, then
\begin{eqnarray*}
\bigg|\int_N\,\frac{d}{dt}\Big|_{t=0}J^N\psi_{u+t\vphi}\bigg|^2
&=&
\bigg|\int_{N}\,\vphi\,\big(H_{\Psi_u}\circ\psi_u\big)\,{\bf J}^N\psi_u\,(\nu_{\Psi_u}\circ\psi_u)\cdot\nu_N\bigg|^2
\\
&\leq&\,\int_N\vphi^2\,{\bf J}^N\psi_u\,\int_{N}\,\big(H_{\Psi_u}\circ\psi_u\big)^2\,{\bf J}^N\psi_u
\\
&\leq&\,\,\|{\bf J}^N\psi_u\|_{C^0(N)}\,\frac{\int_N|\nabla\vphi^2|}{c(N)}\,\int_{\Psi_u}\,|H_{\Psi_u}|^2
\end{eqnarray*}
so that \eqref{deficit comparison} immediately follows. In particular, \eqref{L2 estimates on graphs} is a consequence of \eqref{tesi grafici}, which we now prove. For the sake of clarity we shall first prove the theorem in the flat case when $\k_i \equiv 0$ for all $i = 1,\dots, n$, and thus $N$ is an open bounded set with smooth boundary in some $n$-plane of $\R^{n+1}$.

\medskip

\noindent {\it The flat case}: In this case we have
\[
\de(u)=\sup\Big\{\int_N\frac{\nabla u}{\sqrt{1+|\nabla u|^2}}\cdot\nabla\vphi:\int_N|\nabla\vphi|^2\le 1\,,\vphi\in H^1_0(N)\Big\}\,.
\]
Setting
  \[
  f(t):=\H^n(\Psi_{t\,u})=\int_N\,\sqrt{1+|t\,\nabla u|^2}\,,
  \]
we find
  \[
  \H^n(\Psi_u)-\H^n(N)=\int_0^1f'(t)\,dt=\int_0^1\,t\,dt\int_N\,\frac{\nabla u}{\sqrt{1+|t\nabla u|^2}}\cdot\nabla u=a+b\,,
  \]
where, by definition,
  \begin{eqnarray*}
    a&:=&\int_0^1\,t\,dt\int_N\,\frac{\nabla u}{\sqrt{1+|\nabla u|^2}}\cdot\nabla u\,,
    \\
    b&:=&\int_0^1\,t\,dt\int_N\,\Big\{\frac{1}{\sqrt{1+|t\nabla u|^2}}-\frac{1}{\sqrt{1+|\nabla u|^2}}\Big\}\,|\nabla u|^2\,.
  \end{eqnarray*}
  Clearly we have
  \[
  0\le a\le\frac12\,\de(u)\,\|\nabla u\|_{L^2(N)}\,.
  \]
  Now consider the function $g(\xi)=(1+|\xi|^2)^{-1/2}$, so that
  \[
  |\nabla g(\xi)|=\frac{\abs{\xi}}{(1+|\xi|^2)^{3/2}}\le |\xi|\,,
  \]
  and thus, for every $t\in[0,1]$ and $|\xi|\le1$,
  \[
  |g(t\xi) - g(\xi)| = \Big|\int_{t}^{1} \nabla g(s\xi) \cdot \xi \, ds\Big| \leq \frac{\abs{\xi}^2}{2}\,.
  \]
  This implies that
  \[
  |b|\le \int_0^1\,t\,dt\,\int_N\,|\nabla u|^4\le \Lip(u)^2\,\int_N|\nabla u|^2\,.
  \]
  So far we have proved
  \[
  0\le\H^n(\Psi_u)-\H^n(N)\le \de(u)\,\|\nabla u\|_{L^2(N)}+\Lip(u)^2\,\int_N|\nabla u|^2\,.
  \]
  We now notice that
  \[
  \sqrt{1+|\nabla u|^2}-1=\frac{|\nabla u|^2}{1+\sqrt{1+|\nabla u|^2}}\ge\frac{|\nabla u|^2}3\,,
  \]
  since $\Lip(u)\le\e\le 1$ (this last inequality expresses the strict stability of $N$). In particular,
  \[
  \Big(\frac13-\e^2\Big)\int_N|\nabla u|^2\le\de(u)\,\|\nabla u\|_{L^2(N)}\,,
  \]
  that is
  \[
  \Big(\frac13-\e^2\Big)^2\int_N|\nabla u|^2\le\de(u)^2\,.
  \]
  Since we also have $\sqrt{1+|\xi|^2}-1\le|\xi|^2$ we finally conclude that
  \[
  \Big(\frac13-\e^2\Big)\,\Big\{\H^n(\Psi_u)-\H^n(N)\Big\}\le\de(u)^2\,.
  \]
  This proves the theorem in the flat case.

  \medskip

  \noindent {\it The general case}: Our starting point is again the obvious remark that
  \begin{eqnarray}
    \nonumber
      \H^n(\Psi_u)-\H^n(N)&=&\int_0^1\frac{d}{dt}\H^n(\Psi_{tu})\,dt
      =\int_0^1\,\frac{d}{ds}\Bigg|_{s=0}\H^n(\Psi_{t\,u+s\,u})\,dt
      \\   \label{obvious}
      &=&\int_0^1dt\,\int_N\frac{d}{ds}\Bigg|_{s=0}{\bf J}^N\psi_{t\,u+s\,u}\,.
  \end{eqnarray}
  We recall that, by \eqref{this other formula}, with an open set $A$ as described at the beginning of the section and for every $\vphi\in C^1_c(N)$, it holds
  \[
  \left. \frac{d}{ds}\right|_{s=0}{\bf J}^N\psi_{u+s\vphi}  = \frac{\pa G}{\pa z}(p,u,Du) \varphi + \sum_{i=1}^n\frac{\pa G}{\pa\xi^i}\,(p,u,D u) \pa_i\vphi \qquad\mbox{on $A\cap N$}\,.
  \]
  With reference to \eqref{obvious} we thus have, on $A\cap N$,
  \begin{equation}
    \label{R plus tS}
      \frac{d}{ds}\Bigg|_{s=0}{\bf J}^N\psi_{t\,u+s\,u}
  =\frac{\pa G}{\pa z}(p,t\,u,t\,Du) u + \sum_{i=1}^n\frac{\pa G}{\pa\xi^i}\,(p,t\,u,t\,D u) \pa_iu=R+t\,S
  \end{equation}
  where we have set
  \begin{eqnarray*}
  R&=&\Big\{\frac{\pa G}{\pa z}(p,t\,u,t\,D u)-t\,\frac{\pa G}{\pa z}(p,u,D u)\Big\}\,u
  \\
  &&+\sum_{i=1}^n\Big\{\frac{\pa G}{\pa \xi^i}(p,t\,u,t\,D u)-t\,\frac{\pa G}{\pa \xi^i}(p,u,D u)\Big\}\cdot \pa_i u\,,
  \end{eqnarray*}
  and
  \begin{equation}
    \label{identity for S}
      S=\frac{\pa G}{\pa z}(p,u,D u)\,u+\sum_{i=1}^n\frac{\pa G}{\pa \xi^i}(p,u,D u)\cdot \pa_i u=
  \frac{d}{ds}\Bigg|_{s=0}{\bf J}^N\psi_{u+s\,u}\,.
  \end{equation}
  We now claim that
  \begin{equation}
    \label{stima per R}
    |R|\le C(N)\,\e\,(u^2+|\nabla u|^2)\qquad\mbox{on $A\cap N$}\,.
  \end{equation}
  Notice that the right-hand side of \eqref{stima per R} is independent from the embedding $F$, indeed $|\nabla u|^2=\sum_{i=1}^n(\pa_iu)^2=|Du|^2$ is the squared norm of the tangential gradient of $u$. Analogously for the right-hand side of \eqref{identity for S}, so that we can combine \eqref{R plus tS}, \eqref{identity for S} and \eqref{stima per R} to obtain
  \begin{equation}\label{stima finale L2}
  \frac{d}{ds}\Bigg|_{s=0}{\bf J}^N\psi_{t\,u+s\,u}
  \le   t\,\frac{d}{ds}\Bigg|_{s=0}{\bf J}^N\psi_{u+s\,u}+\e\,C(N)\,(u^2+|\nabla u|^2)\qquad\mbox{on $N$}\,,
  \end{equation}
  from which we will easily conclude the proof; but we first prove \eqref{stima per R}. We start by using \eqref{xi-gradient of G} to compute
\begin{eqnarray*}
  &&\sum_{i=1}^n\Big\{\frac{\pa G}{\pa\xi^i}(p,t\,u,t\,D u)-t\,\frac{\pa G}{\pa\xi^i}(p,u,D u)\Big\}\pa_iu
  \\
  &=&
  \big(g(p,t\,u,t\,Du)-g(p,u,D u)\big)\sum_{i=1}^n\,\frac{t(\pa_iu)^2}{(1+t\,\k_i u)^2}
  \\
  &&+g(p,u,D u)\sum_{i=1}^n\Big(\frac{t}{(1+t\k_i u)^2}-\frac{t}{(1+\k_iu)^2}\Big)\,(\pa_iu)^2\,,
\end{eqnarray*}
where
\[
\big|g(p,tu,t\,D u)-g(p,u,D u)\big|\le C(N)\,(1-t)\,\big(|u|+|\nabla u|\big)\,,
\]
and where, recalling $H_N=0$ and $0\le t\le 1$, we get
\[
\Big|\sum_{i=1}^n\frac{t}{(1+t\k_i u)^2}-\frac{t}{(1+\k_iu)^2}\Big|\le C(N)\,u^2\,.
\]
Hence we obtain
\begin{eqnarray}\nonumber
\Big|\sum_{i=1}^n\Big\{\frac{\pa G}{\pa\xi^i}(p,t\,u,t\,D u)-t\,\frac{\pa G}{\pa\xi^i}(p,u,D u)\Big\}\pa_iu\Big|
&\le& C(N)\,|\nabla u|^2\,(|u|+|\nabla u|)
\\\label{addio1}
&\le& C(N)\,\e\,|\nabla u|^2\,.
\end{eqnarray}
In order to bound
\[
\Big\{\frac{\pa G}{\pa z}(p,t\,u,t\,D u)-t\,\frac{\pa G}{\pa z}(p,u,D u)\Big\}\,u\,,
\]
we set $h(p,z,\xi)=(\pa G/\pa z)(p,z,\xi)$ and use \eqref{z-derivative of G} to find
\[
h(p,0,0)=G(p,0,0)\,\sum_{j=1}^n\k_j=H_N=0\,,
\]
and thus
\begin{eqnarray*}
  h(p,t\,u,t\,D u)-t\,h(p,u,D u)
  &=&\frac{\pa h}{\pa z}(p,0,0)\,(tu)+t\,\sum_{i=1}^n\frac{\pa h}{\pa \xi^i}(p,0,0)\,\pa_iu
  \\
  &&-t\,\frac{\pa h}{\pa z}(p,0,0)u-t\,\sum_{i=1}^n\frac{\pa h}{\pa \xi^i} (p,0,0)\,\pa_iu
  \\
  &&+{\rm O}(u^2+|\nabla u|^2)
  ={\rm O}(u^2+|\nabla u|^2)\,.
\end{eqnarray*}
Combined with $\|u\|_{C^0(N)}\le\e$ and with \eqref{addio1}, this estimate proves \eqref{stima per R}.

Having proved \eqref{stima finale L2}, we now complete the proof as follows. By \eqref{obvious},
\begin{eqnarray}\nonumber
  \H^n(\Psi_u)-\H^n(N)&\le&
  \int_0^1\,dt\,\int_N\,t\,\frac{d}{ds}\Bigg|_{s=0}{\bf J}^N\psi_{u+s\,u}+C(N)\,\e\,(u^2+|\nabla u|^2)
  \\\nonumber
  &\le&\de(u)\,\|\nabla u\|_{L^2(N)}+C(N)\,\e\,\int_N\,u^2+|\nabla u|^2
  \\\label{ciao}
  &\le&\de(u)\,\|\nabla u\|_{L^2(N)}+C(N)\,\e\,\int_N\,|\nabla u|^2\,,
  \end{eqnarray}
  where we have used \eqref{stima finale L2}, the definition of $\de(u)$ and the Poincar\'e inequality on $N$. By the strict stability of $N$ and thanks to a classical computation (see, e.g., \cite[Lemma 3.2]{dephilippismaggi}) we find
\[
\H^n(\Psi_u)-\H^n(N)\ge\frac1{C(N)}\,\int_N|\nabla u|^2\,,
\]
and we thus conclude that, if $\e$ is suitably small,
\[
\int_N|\nabla u|^2\le C(N)\de(u)\,\|\nabla u\|_{L^2(N)}\,,
\]
that is
\begin{equation} \label{gradient bound mc deficit}
\int_N|\nabla u|^2\le C(N)\,\de(u)^2\,,
\end{equation}
and \eqref{stima w12} follows by combining \eqref{gradient bound mc deficit} with the Poincar\'e inequality. Combining \eqref{ciao} with \eqref{gradient bound mc deficit} we prove \eqref{tesi grafici}. To prove the $C^0$ estimate we combine \eqref{c0 estimate pre fuglede}, i.e.
\begin{eqnarray*}
  \|u\|_{C^0(N)}\le C(N,q)\,\Big\{\|u\|_{L^2(N)}+\|H_{\Psi_u}\circ\psi_u\|_{L^q(N)}\Big\}\,,
\end{eqnarray*}
with \eqref{stima w12}, \eqref{deficit comparison} and the area formula. This completes the proof of the theorem.
\end{proof}

\bibliographystyle{is-alpha}
\bibliography{references}

\end{document}